\numberwithin{equation}{section}
\newtheorem*{theorem*}{Theorem}
\newtheorem{theorem}{Theorem}[section]
\newtheorem{proposition}{Proposition}[section]
\newtheorem{corollary}{Corollary}[theorem]
\newtheorem{lemma}{Lemma}[section]
\theoremstyle{remark}
  \newtheorem{remark}{Remark}[section]
\title{Rotationally invariant constant Gauss curvature surfaces in Berger spheres}
\author{\href{http://www.ugr.es/local/ftorralbo/}{Francisco Torralbo}}
\address{Dpto.\ de Geometr\'{\i}a y Topolog\'{\i}a - Facultad de Ciencias \\
  Universidad de Granada \\
  Fuentenueva s/n 18071 Granada (Spain)}
\email{ftorralbo@ugr.es}
\author{\href{https://perswww.kuleuven.be/~u0043959/indexeng.html}{Joeri Van der Veken}}
\address{KU Leuven \\
  Department of Mathematics \\
  Celestijnenlaan 200B - Box 2400 \\
  3001 Leuven (Belgium).
}
\email{joeri.vanderveken@kuleuven.be}
\thanks{The first author is supported by the Spanish \textsc{mineco-feder} research project \textsc{mtm}2017-89677-\textsc{p} and the research project \textsc{p9-ugr-curvature}. The second author is supported by the Excellence Of Science project \textsc{g0h4518n} of the Belgian government and project \textsc{3e160361} of the KU Leuven Research Fund.}
\subjclass[2010]{53C21, 53C42, 53C40}
\keywords{Constant Gauss curvature surfaces, complete surfaces, Berger spheres, homogeneous three-manifolds}
\begin{document}

\maketitle

\begin{abstract}\small
  We give a full classification of complete rotationally invariant surfaces with constant Gauss curvature in Berger spheres: they are either Clifford tori, which are flat, or spheres of Gauss curvature $K \geq K_0$ for a positive constant $K_0$, which we determine explicitly and depends on the geometry of the ambient Berger sphere. For values of $K_0 \leq K \leq K_P$, for a specific constant $K_P$, it was not known until  now whether complete constant Gauss curvature $K$ surfaces existed in Berger spheres, so our classification provides the first examples. For $K > K_P$, we prove that the rotationally invariant spheres from our classification are the only topological spheres with constant Gauss curvature in Berger spheres. 
\end{abstract}

\section{Introduction}
\label{sec:introduction}

The classification of surfaces with constant Gauss curvature (\textsc{cgc} in the sequel) in the $3$-sphere is a classical topic in submanifold theory. Let $\mathbb{S}^3$ represent the $3$-dimensional sphere of constant sectional curvature $1$. If $S$ is a complete \textsc{cgc} $K$ surface in $\mathbb S^3$, then either $K \geq 1$ and $S$ is a totally umbilical sphere, or $K = 0$. In particular, there are no complete \textsc{cgc} $K$ surfaces with $0 < K < 1$, nor with $K < 0$. Moreover, for $K = 0$ there is a rich and classical theory due to Bianchi: flat surfaces are constructed by multiplying two intersecting asymptotic curves using the unit quaternion group structure (see~\cite{Galvez2009} and the references therein). 

Our main goal is to study \textsc{cgc} surfaces in the Berger spheres. These Riemannian $3$-spheres are the most symmetric ones after the round sphere and they are obtained by deforming the metric of a round sphere in the direction of the fibers of the Hopf fibration. Up to a homothecy, they can be represented as a $1$-parameter family $\{\mathbb{S}^3_\tau \, : \, \tau > 0\}$, where the fibers of $\mathbb S^3_{\tau}$ are enlarged by a factor $\tau$ (see Section~\ref{sec:preliminaries} for more details). Hence, the round sphere is the element of the family with $\tau = 1$, i.e., $\mathbb S^3_1 = \mathbb S^3$.

On the one hand, if we restrict ourselves to the study of \textsc{cgc} spheres, we can apply a general existence result due to Pogorelov~\cite[Theorem 1, p.~413]{Pogorelov1973} (see also~\cite[Theorem B, p.~419]{Labourie1989}) that ensures the existence of an isometric immersion of a sphere into a Riemannian $3$-manifold if its Gauss curvature $K$ is everywhere strictly larger than an upper bound of the sectional curvature of the manifold. This condition ensures that the determinant of the shape operator is strictly positive and the result uses the theory of strongly elliptic \textsc{pde}s. The sectional curvature of a plane $\Pi$ with unit normal $N$ in the Berger sphere $\mathbb S^3_{\tau}$ is given by
\begin{equation}\label{eq:sectional-curvature-homogeneous}
  K(\Pi) = \tau^2 + 4(1 - \tau^2)g_\tau(N, \xi)^2,
\end{equation}
where $g_\tau$ is the Berger metric and $\xi$ is a unit vector field tangent to the fibers of the Hopf fibration. Hence, we immediately get the following existence result:
\begin{quote}\itshape
  Let $\mathbb{S}^3_\tau$ be a Berger sphere and $K$ a positive constant. If 
\begin{equation}\label{eq:bound-sectional-curvature-homogeneous}
  K > K_P = 
  \begin{cases}
      4 - 3 \tau^2 & \text{if } \tau \leq 1, \\
      \tau^2 & \text{if } \tau > 1, \\
  \end{cases}
\end{equation}
  then there exists a constant Gauss curvature $K$ sphere immersed in $\mathbb{S}^3_\tau$.
\end{quote}
Moreover, Pogorelov's result also ensures the uniqueness of the sphere once a point and an exterior normal to the sphere at that point are fixed (see the striped region in Figure~\ref{fig:non-existence-region-compact-cgc-surfaces-Berger}).

On the other hand, Urbano and the first author~\cite{TU2010} gave non-existence intervals for the Gauss curvature of compact \textsc{cgc} surfaces in the Berger spheres (see light gray region in Figure~\ref{fig:non-existence-region-compact-cgc-surfaces-Berger}). They showed that there are no compact  \textsc{cgc} surfaces for $K \le 0$ unless  $K = 0$ and the surface is a Hopf torus (i.e., the preimage of a closed curve by the Hopf fibration) or $K = 4(1 - \tau^2) < 0$ for which no examples are known. Moreover, they also show that there are no complete \textsc{cgc} surfaces for $0 < K < K_{TU}$ for a positive constant $K_{TU}$ depending on $\tau$ that turns out to be strictly less than $K_P$ (see~\eqref{eq:bound-sectional-curvature-homogeneous} and light gray region in Figure~\ref{fig:non-existence-region-compact-cgc-surfaces-Berger}).

In this paper we make progress on the existence problem of complete \textsc{cgc} surfaces by classifying the examples which are invariant by a $1$-parameter group of ambient isometries. In particular, we classify the complete \textsc{cgc} surfaces in Berger spheres that are \emph{rotationally invariant} (see Section~\ref{sec:rotationally-invariant-examples} for more details).

\begin{theorem}\label{thm:main-theorem}
Consider the Berger sphere $\mathbb{S}^3_\tau$ and let $K$ be a constant such that
\begin{equation}\label{eq:lower-bound-Gauss-curvature-rotational-examples}
   K \geq K_0 = 
    \begin{cases}
      4 - 3 \tau^2 & \text{if } \tau \leq 1, \\
      \frac{1}{\tau^2} & \text{if } \tau > 1. \\
    \end{cases}
  \end{equation}
  Then, up to ambient isometries, there exists a unique rotationally invariant sphere with constant Gauss curvature $K$ in $\mathbb{S}^3_\tau$. Together with the Clifford tori, which are flat, these surfaces are the only complete rotationally invariant \textsc{cgc} surfaces in $\mathbb{S}^3_\tau$.
\end{theorem}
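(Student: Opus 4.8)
The plan is to reduce the problem to an ODE analysis by exploiting rotational symmetry. First I would fix the rotational group: a one-parameter group of ambient isometries of $\mathbb{S}^3_\tau$ that fixes (setwise) a pair of antipodal Hopf fibers, or equivalently the subgroup of $\mathrm{Isom}(\mathbb{S}^3_\tau)$ acting as rotations about a geodesic. A rotationally invariant surface $S$ is then swept out by its profile curve in a suitable two-dimensional "orbit space," and I would introduce coordinates adapted to this $S^1$-action (a distance-type coordinate $\rho$ along the profile and the fiber coordinate), in which the Berger metric takes a warped form. The induced metric and second fundamental form of $S$ depend only on $\rho$, so the Gauss equation for $\mathbb{S}^3_\tau$ becomes, via \eqref{eq:sectional-curvature-homogeneous}, a single second-order ODE for the profile curve, with the constant $K$ entering as a parameter. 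I would also record the two natural first-order conserved quantities coming from the symmetry (a "Clairaut"-type relation for the angle the profile makes with the orbits, and the fact that $K$ is constant), which reduce the system to a first-order analysis in the phase plane.

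Next I would carry out a qualitative study of the phase portrait of this ODE. The fixed points of the $S^1$-action (the axis geodesic) impose boundary conditions: a closed profile curve that closes up smoothly at the axis gives a topological sphere, while a profile curve that is a closed loop not meeting the axis gives a torus. I would show that the only periodic orbits not meeting the axis that produce an embedded (hence complete) surface are the ones giving the Clifford (Hopf) tori, and that these are automatically flat — this last point follows from the Gauss equation together with the fact that a Hopf torus over a curve of geodesic curvature related to $\tau$ has the prescribed extrinsic data; one checks $K=0$ directly. For the sphere case, I would analyze the trajectories leaving the axis: by an energy/monotonicity argument (using the conserved quantities), a trajectory that starts orthogonally at the axis either returns to the axis — closing up to a smooth sphere — or escapes, and the dichotomy is governed by the sign of a discriminant that depends on $K$ and $\tau$. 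Matching the smoothness condition at both poles forces a shooting-type constraint, and solving it yields existence precisely for $K \geq K_0$ with $K_0$ as in \eqref{eq:lower-bound-Gauss-curvature-rotational-examples}; uniqueness up to ambient isometry follows because the shooting problem has a unique solution once the pole and the value $K$ are fixed.

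The main obstacle I expect is the sharp threshold computation: showing that smooth closure at the poles is possible exactly when $K \geq K_0$, and not for $K < K_0$, which amounts to analyzing the behavior of solutions of the profile ODE near the singular point (the axis), where the warped metric degenerates. One must expand the solution in the degenerate coordinate, check that the regularity (evenness) conditions at $\rho = 0$ are compatible with the first integral, and then track how the "other end" of the trajectory behaves as $K$ varies — in particular isolating the limiting value $K_0$ as the value at which the returning trajectory just fails/just manages to hit the axis again transversally. The asymmetry between $\tau \leq 1$ and $\tau > 1$ in \eqref{eq:sectional-curvature-homogeneous} (the sectional curvature bound flips) is exactly what produces the two-case formula for $K_0$, and care is needed because for $\tau > 1$ the relevant curvature is the minimum rather than the maximum of the sectional curvatures. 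A secondary technical point is verifying completeness: a rotationally invariant surface that is a topological sphere and closes up smoothly is automatically compact, hence complete, so no separate completeness argument is needed beyond the smoothness-at-the-poles analysis; for the tori, closedness of the profile loop gives compactness.
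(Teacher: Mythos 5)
Your overall strategy --- reduce to a profile-curve ODE in the orbit space, extract a first integral from the symmetry, and run a phase-plane/shooting analysis --- is exactly the route the paper takes (the first integral is the ``energy'' $\mathcal E$ of Lemma~\ref{lm:ode}, and the phase plane is the rectangle $[0,1]\times[-1,1]$ in the variables $(X,Y)=(\sin^2 x,\cos\alpha)$). However, there is a genuine gap in where you expect the decisive argument to come from. You locate the sharp threshold $K_0$ in a local expansion of the solution near the degenerate axis point. That only works for half of the theorem: for $\tau\le 1$ the local condition at the axis, namely $\partial_X\mathcal F(0,1)=K-(4-3\tau^2)\ge 0$ for the first integral $\mathcal F(X,Y)=\frac{(1-2\lambda X)^2}{1-\lambda X}(1-X)Y^2+K(1-\lambda X)X$, does give the necessity of $K\ge 4-3\tau^2$, but sufficiency requires global boundary estimates ($\mathcal F(X,\pm1)\ge 1$ on the relevant subrectangle and $\mathcal F\ge 1$ on the far edge) in order to force the level-$1$ curve emanating from $(0,1)$ all the way to $(0,-1)$. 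For $\tau>1$ the threshold $K_0=1/\tau^2$ is not visible at the axis at all: the obstruction for $K<1/\tau^2$ is that $\mathcal F(X,0)=K(1-\lambda X)X\le K(1-\lambda)<1$ for every $X$, i.e.\ a trajectory with energy $1$ can never reach $\cos\alpha=0$, so the profile curve can never turn around to come back to the axis. This is a global statement about the first integral, not an asymptotic expansion at the singular point, and your plan as written would not isolate $1/\tau^2$.

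A second gap concerns the classification of \emph{all} complete examples, not only the spheres: you assert that the only closed orbits avoiding the axis give Clifford tori, but supply no mechanism. The needed argument again uses the first integral: for $K\le 0$ a complete profile curve avoiding the axis and the pole $\sin x=1$ traces a compact regular level curve of $\mathcal F$ inside the open rectangle, hence a Jordan curve that must enclose an interior critical point of $\mathcal F$; the only interior critical points lie on the segment $\{X=\tfrac{1}{2\lambda}\}$ (when $\lambda>\tfrac12$; there are none when $\lambda\le\tfrac12$) where $\mathcal F$ attains its minimum, which forces the level curve to degenerate to that segment, i.e.\ $x$ constant and the surface a Clifford torus. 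One also needs to note that for $K>0$ completeness forces compactness, so that the universal cover is a sphere and the sphere analysis applies, and that a profile curve may pass through $\sin x=1$ and be continued by a fiber translation --- a case your dichotomy (``returns to the axis or escapes'') does not cover. Finally, ``embedded (hence complete)'' is backwards: the spheres produced here need not be embedded (there are non-embedded ones for small $\tau$), and embeddedness plays no role in the completeness discussion.
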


\begin{figure}[htb]
  \centering
  \includegraphics[width=11cm]{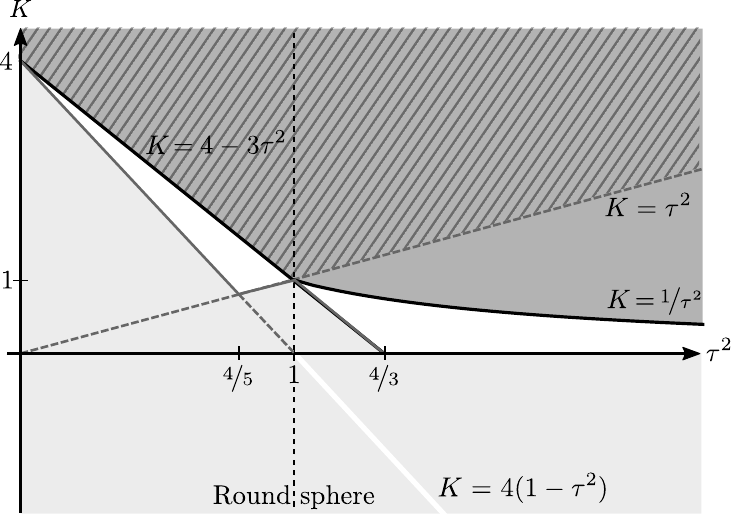}
  \caption{Each point represents a compact \textsc{cgc} $K$ surface in the Berger sphere $\mathbb{S}^3_\tau$ ($\tau^2$ in the horizontal axis and $K$ is on the vertical axis). The non-existence region for compact \textsc{cgc} surfaces given in~\cite{TU2010} is colored in light gray (excluding the case $K = 0$ that corresponds to Hopf tori), while the existence region for rotationally invariant \textsc{cgc} spheres is colored in dark gray, see Proposition~\ref{prop:CGC-spheres}. The surfaces in the striped area are unique (hence they are the rotationally invariant examples, see Corollary~\ref{cor:uniqueness}) but the uniqueness is unknown in the boundary of that region. The existence of compact \textsc{cgc} surfaces is unknown in the white area.}
  \label{fig:non-existence-region-compact-cgc-surfaces-Berger}
\end{figure}

Recall that a Clifford torus is the preimage of a circle by the Hopf fibration. It is interesting to remark that when the length of the fiber (which is proportional to $\tau$) is small enough, the \textsc{cgc} spheres are not embedded (see Figures~\ref{fig:region-non-embeddedness}).

Theorem~\ref{thm:main-theorem} improves the lower bound $K_P$ for the Gauss curvature  coming from Pogorelov's existence theorem because $K_0 \leq K_P$ (compare~\eqref{eq:bound-sectional-curvature-homogeneous} and \eqref{eq:lower-bound-Gauss-curvature-rotational-examples}). More precisely, there exist \textsc{cgc} $K$ spheres for $K = 4 -3\tau^2$ (if $\tau \leq 1$) and for $\frac{1}{\tau^2} \leq K \leq \tau^2$ (if $\tau > 1$). Observe that, if $K_0 \leq K \leq K_P$ then the determinant of the shape operator vanishes at some points of the sphere so Pogorelov's proof fails since the theory of strongly elliptic \textsc{pde}s can not longer be applied. Figure~\ref{fig:non-existence-region-compact-cgc-surfaces-Berger} summarizes the result. 

Moreover, from a detailed analysis of the profile curve of the rotationally invariant spheres together with Pogorelov's uniqueness result, we get the following corollary (see the striped region in Figure~\ref{fig:non-existence-region-compact-cgc-surfaces-Berger}).

\begin{corollary}\label{cor:uniqueness}
  If $S$ is a constant Gauss curvature $K$ sphere in  $\mathbb{S}^3_\tau$ such that $K > 4 - 3 \tau^2$ if  $\tau^2 \leq 1$ or  $K > \tau^2$ if  $\tau^2 > 1$ then it is rotationally invariant.
\end{corollary}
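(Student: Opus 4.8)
The plan is to deduce the corollary from the existence of the rotationally invariant examples of Theorem~\ref{thm:main-theorem} together with the uniqueness part of Pogorelov's theorem recalled in the introduction. By~\eqref{eq:sectional-curvature-homogeneous} the supremum of the sectional curvature of $\mathbb{S}^3_\tau$ equals $K_P = 4 - 3\tau^2$ for $\tau \leq 1$ and $K_P = \tau^2$ for $\tau > 1$, so the hypothesis $K > K_P$ is precisely what is needed to invoke Pogorelov's theorem; moreover, by the Gauss equation the product of the principal curvatures of any \textsc{cgc} $K$ sphere in $\mathbb{S}^3_\tau$ equals $K$ minus the ambient sectional curvature of the tangent plane, hence is at least $K - K_P > 0$, so such a sphere is locally strictly convex and its exterior unit normal is unambiguously defined and preserved by ambient isometries. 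Since also $K_0 \leq K_P$ (compare~\eqref{eq:bound-sectional-curvature-homogeneous} and~\eqref{eq:lower-bound-Gauss-curvature-rotational-examples}), Theorem~\ref{thm:main-theorem} provides a rotationally invariant \textsc{cgc} $K$ sphere $S_0 \subset \mathbb{S}^3_\tau$, invariant under a $1$-parameter group $G$ of ambient isometries. It therefore suffices to produce an ambient isometry $\psi$ such that $\psi(S_0)$ passes through some point of $S$ with the same exterior normal there as $S$: Pogorelov's uniqueness then gives $S = \psi(S_0)$, and since an ambient isometry conjugates $G$ to another $1$-parameter group of ambient rotations, $S$ inherits rotational invariance.

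The key point is the construction of $\psi$, and for this I would first observe that on \emph{any} immersed topological sphere $f \colon \mathbb{S}^2 \to \mathbb{S}^3_\tau$ the tangential part $\xi^\top$ of the unit vector field $\xi$ tangent to the Hopf fibers (as in~\eqref{eq:sectional-curvature-homogeneous}) is a smooth vector field on $\mathbb{S}^2$, hence vanishes at some point since $\chi(\mathbb{S}^2) = 2 \neq 0$; at such a point the normal is ``vertical'', i.e.\ $N = \pm\,\xi$. Applying this to $S$ and to $S_0$ we get points $p \in S$ and $q_0 \in S_0$ at which the exterior normals equal $\varepsilon\,\xi_p$ and $\delta\,\xi_{q_0}$ with $\varepsilon, \delta \in \{-1, 1\}$. (For $S_0$ such a point is in fact a point where $S_0$ meets the axis of $G$, a Hopf fiber, which can also be read off from the analysis of the profile curve in Section~\ref{sec:rotationally-invariant-examples}.)

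Next I would use that, although the isometry group of $\mathbb{S}^3_\tau$ is only $4$-dimensional, it acts transitively on such ``vertical flags'': its identity component acts transitively on $\mathbb{S}^3_\tau$ and preserves $\xi$, while the conjugation $c(z_1, z_2) = (\bar{z}_1, \bar{z}_2)$ is an isometry of the round $\mathbb{S}^3$ with $dc(\xi) = -\xi$, hence an isometry of $g_\tau$ as well (as $g_\tau$ is the round metric rescaled in the $\xi$-direction). Choosing an element of the identity component sending $q_0$ to $p$ when $\varepsilon\delta = 1$, and a composition $\phi \circ c$ with $\phi$ in the identity component when $\varepsilon\delta = -1$, one obtains an ambient isometry $\psi$ with $\psi(q_0) = p$ and $d\psi_{q_0}(\delta\,\xi_{q_0}) = \varepsilon\,\xi_p$, i.e.\ carrying the exterior normal of $S_0$ at $q_0$ to that of $S$ at $p$. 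Then $\psi(S_0)$ is a \textsc{cgc} $K$ sphere through $p$ with exterior normal $\varepsilon\,\xi_p$, so Pogorelov's uniqueness forces $S = \psi(S_0)$, which is rotationally invariant because it is invariant under the conjugate group $\psi G \psi^{-1}$.

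The step I expect to require the most care is this normal matching: since the isotropy group of a point of $\mathbb{S}^3_\tau$ is only $1$-dimensional, an arbitrary unit normal cannot be prescribed at an arbitrary point, which is why it is essential to locate a point where the normal is vertical — there the isotropy orbit of the normal collapses to a single vector and the only remaining ambiguity, a sign, is absorbed by the orientation-reversing isometry $c$. Everything else is routine bookkeeping: the use of the Gauss equation to guarantee local strict convexity (hence a well-defined, isometry-invariant exterior normal), and the fact that conjugating $G$ by an ambient isometry again produces a $1$-parameter group of ambient rotations, so that rotational invariance is indeed transported to $S$.
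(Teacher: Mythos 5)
Your argument is correct and reaches the same endpoint (reduction to Pogorelov's uniqueness after matching a point and an exterior normal), but the mechanism you use for the matching step is genuinely different from the paper's. The paper shows that the rotationally invariant sphere $S_K$ can be placed tangent to \emph{any} prescribed plane at \emph{any} point: it proves that $\nu(s)=g_\tau(N_{\gamma(s)},\xi)$ is strictly monotone along half of the profile curve and sweeps out all of $[-1,1]$, so for an arbitrary $(p,\eta)$ one first matches the vertical component $g_\tau(\eta,\xi)$ by choosing the right point of $S_K$, then uses homogeneity and the one-dimensional rotational isotropy to finish. You instead avoid having to realize an arbitrary normal by locating a \emph{distinguished} point on the given sphere $S$ where the normal is vertical, via Poincar\'e--Hopf applied to $\xi^\top$, and matching only there; the residual sign ambiguity is absorbed by the $\xi$-reversing isometry $c(z,w)=(\bar z,\bar w)$ (which indeed preserves $g_\tau$ and reverses $\xi$ — though you could also dispense with $c$ by noting that $S_K$ meets the axis twice, with $\nu=-1$ at one end and $\nu=+1$ at the other, so both signs already occur on $S_K$ itself). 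Your route is softer: it needs no monotonicity of $\nu$ along the profile curve, only the existence of an axis point of $S_K$ and a topological zero of $\xi^\top$ on $S$, at the price of the extra observation about transitivity on vertical flags. The paper's route yields slightly more along the way (tangency can be prescribed arbitrarily, not just at vertical points), which is why it needs the finer description of the profile curve from Proposition~\ref{prop:CGC-spheres}. Both correctly supply the hypothesis of Pogorelov's uniqueness theorem, and your closing remarks on strict convexity via the Gauss equation and on conjugating the rotation group are accurate.
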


Finally, the Berger spheres are examples of homogeneous Riemannian $3$-ma\-ni\-folds with isometry group of dimension $4$. This family is classified in terms of two real parameters $\kappa$ and $\tau$ and they are usually named $E(\kappa, \tau)$-spaces (see, for instance, \cite{DHM2009}). In particular, $\mathbb{S}^3_\tau = E(4, \tau)$. They are characterized (see~\cite{Man2014, Joeri2008}) as simply connected $3$-manifolds with a Riemannian submersion $\pi: E(\kappa, \tau) \to \mathbb{M}^2(\kappa)$, where $\mathbb{M}^2(\kappa)$ stands for a simply connected complete constant curvature $\kappa$ surface, with constant bundle curvature $\tau$ and whose fiber is generated by a unit Killing vector field $\xi$. If $\kappa - 4\tau^2 = 0$, then we get a space form. In general, the sectional curvature of a plane $\Pi$ with unit normal $N$ is given by $K(\Pi) = \tau^2 + (\kappa - 4\tau^2)\langle N, \xi\rangle^2$, where $\langle \cdot, \cdot \rangle$ is the metric in  $E(\kappa, \tau)$ (cp.~\eqref{eq:sectional-curvature-homogeneous}), so a similar lower bound for the existence of \textsc{cgc} $K$ surfaces as in~\eqref{eq:bound-sectional-curvature-homogeneous} can be written down for $E(\kappa, \tau)$. Different studies concerning \textsc{cgc} surfaces in $E(\kappa, \tau)$ spaces have been done.

For example, all surfaces in $E(\kappa,\tau)$ for which the unit normal makes a constant angle with the unit Killing vector field $\xi$ have constant Gaussian curvature, see~\cite{DFV07, DM09, FMV09, MO2014, MOP2016} for classifications of this relatively large family of examples in the different types of $E(\kappa,\tau)$ spaces.

In the rotationally invariant setting, \textsc{cgc} surfaces in the Heisenberg space $\mathrm{Nil}_3 = E(0, \frac{1}{2})$ were described in~\cite{CPR96} (see also~\cite{MO2005} for a general description of \textsc{cgc} surfaces invariant by a $1$-parameter group of isometries). Among them, only those with $K \geq 0$ are complete. Moreover, in the product spaces $\mathbb{S}^2\times \mathbb{R} = E(1,0)$ and $\mathbb{H}^2\times \mathbb{R} = E(-1,0)$, the complete ones were described in~\cite{AEG07-1} where the authors showed that they exist for $K \geq 1$ and $K = 0$ in $\mathbb{S}^2 \times \mathbb{R}$ and for $K \ge -1$ in $\mathbb{H}^2\times \mathbb{R}$. More importantly, the authors obtained Liebmann and Hilbert type results in those spaces: there is a unique complete \textsc{cgc} surface for $K > 1$ in $\mathbb{S}^2\times \mathbb{R}$ and for $K > 0$ in $\mathbb{H}^2 \times \mathbb{R}$ (that is the rotationally invariant sphere), and there is no complete \textsc{cgc} surfaces for $K < -1$ in $\mathbb{S}^2\times \mathbb{R}$ and $\mathbb{H}^2\times \mathbb{R}$. However, it is unknown whether there are complete examples for $-1 \le K < 0$ and $0 < K < 1$ in $\mathbb{S}^2\times \mathbb{R}$ and it is not known whether the rotationally invariant examples for $K = 1$ in $\mathbb{S}^2 \times \mathbb{R}$ and $-1 \le K \le 0$ in $\mathbb{H}^2 \times \mathbb{R}$ are unique. This is similar to the case $\tau < 1$ in the Berger sphere $\mathbb{S}^3_\tau$ where the uniqueness remains unknown only for $K = 4 - 3\tau^2$. 

The authors would like to thanks to J.\ A.\ Gálvez and F.\ Urbano for useful conversations during the preparation of this manuscript.

\section{Preliminaries}\label{sec:preliminaries}

The Berger spheres form a family of homogeneous Riemannian $3$-manifolds, diffeomorphic to $3$-spheres, which have isometry groups of dimension $4$. Although they are described in the literature as a $2$-parameter family (e.g.~\cite{Torralbo2012}), we can describe them, up to homothecy, as a $1$-parameter family as follows: a Berger sphere $\mathbb{S}^3_\tau$ is the $3$-sphere $\{(z, w) \in \mathbb{C}^2\colon |z|^2 + |w|^2 = 1\}$ endowed with the metric
\[
  g_{\tau}(X, Y) = \langle X, Y \rangle - ( 1 - \tau^2) \langle X, V\rangle \langle Y, V \rangle,
\] 
where $\langle \, , \rangle$ stands for the metric on the sphere induced from the Euclidean metric on $\mathbb C^2 = \mathbb R^4$, and the vector field $V$ is given by $V_{(z,w)} = (iz, iw)$. If  $\tau = 1$, then the metric  $g_{1}$ is the round metric of curvature $1$ on the $3$-sphere. 

The usual Hopf fibration, 
\[
\Pi: \mathbb{S}^3_\tau \to \mathbb{S}^2(4): (z, w) \mapsto \bigl( z \bar{w}, \, \tfrac{1}{2}(|z|^2 - |w|^2) \bigr),
\] 
where $\mathbb{S}^2(4)$ denotes the round $2$-sphere of curvature $4$ in $\mathbb R^3 = \mathbb C \times \mathbb R$, is a Riemannian submersion. The fibers of $\Pi$ are geodesic circles of length $2\pi \tau$ and $\xi =  \frac{1}{\tau}V$ is a unit Killing vector field tangent to these fibers. Remark that, if $\tau$ goes to zero, the length of the vertical geodesics also goes to zero and, in the limit, the $3$-sphere degenerates to a $2$-sphere.

We can group the non-round Berger spheres in two classes: those with $\tau < 1$ can be realized as geodesic spheres in the complex projective plane, while those with  $\tau > 1$ can be seen as geodesic spheres in the complex hyperbolic plane (see~\cite{TU2011} for more details). This sheds some light on the different behavior in our main results according to whether $\tau \geq 1$ or $\tau < 1$. In the following, we will introduce the auxiliary parameter
\begin{equation} \label{eq:lambda}
\lambda = 1 -\tau^2
\end{equation}
to simplify the notation. Clearly, $\lambda < 1$ always. Moreover, $1>\lambda>0$ corresponds to $\tau < 1$; $\lambda=0$ corresponds to the round case $\tau = 1$ and $\lambda<0$ corresponds to $\tau > 1$.

\section{The rotationally invariant examples in the Berger sphere}
\label{sec:rotationally-invariant-examples}

This section is devoted to finding all the complete rotationally invariant \textsc{cgc} surfaces in Berger spheres. We call a surface \emph{rotationally invariant} if it is invariant by the $1$-parameter group of isometries of the Berger sphere given by
\[
\mathrm{Rot} = \left\{ \begin{pmatrix}1 & 0 \\  0 & e^{it}\end{pmatrix}:\, t \in \mathbb{R}\right\}.
\]
This group fixes the curve $\ell = \{(z, 0) \in \mathbb{S}^3_\tau\}$, which we will call the \emph{axis of revolution}. Any other $1$-parameter group of isometries fixing a curve is actually conjugate to $\mathrm{Rot}$ (see \cite[Proposition 3]{Torralbo2010a}), so there is no geometric restriction in our choice. 

Observe that the set $\mathbb{S}^3_\tau/\mathrm{Rot}$ can be identified with the closed hemisphere $H = \{(z,a) \in \mathbb{C} \times \mathbb{R}:\, |z|^2 + a^2 = 1, \ a \geq 0 \} \subset \mathbb{S}^2(1)$, bounded by $\ell$. Hence, any rotationally invariant surface is the orbit by $\mathrm{Rot}$ of a curve 
\[
\gamma: I \subset \mathbb{R} \to H \subset \mathbb S^2(1) : s \mapsto \left(e^{iy(s)}\cos x(s), \, \sin x(s) \right), 
\]
with $\sin x(s) \geq 0$, which we will call the \emph{profile curve of the surface}. Observe that if $\sin x(s) = 0$ then the curve $\gamma$ intersects the axis of revolution and a singularity in the surface might appear. 

The surface obtained by moving $\gamma$ by $\mathrm{Rot}$ can be parametrized by
\[
\Phi: I \times \mathbb{R} \to \mathbb{S}^3_\tau: (s,t) \mapsto \left(e^{iy(s)} \cos x(s), e^{it} \sin x(s) \right).
\]
The components of the first fundamental form are
\begin{equation} \label{eq:components_1st_fund}
  \begin{split}
    E(s) &= g_{\tau}(\Phi_s, \Phi_s) = x'(s)^2 + \cos^2 x(s) (1-\lambda \cos^2 x(s))y'(s)^2, \\ 
    F(s) &= g_{\tau}(\Phi_s, \Phi_t) = -\lambda \sin^2 x(s) \cos^2 x(s) y'(s), \\ 
    G(s) &= g_{\tau}(\Phi_t, \Phi_t) = \bigl(1- \lambda\sin^2 x(s)\bigr)\sin^2 x(s),
  \end{split}
\end{equation} 
where $\lambda = 1 - \tau^2$ (see~\eqref{eq:lambda}).

Observe that $G(s_0) = 0$ if and only if $\sin x(s_0) = 0$, that is, the curve $\gamma$ touches the axis of revolution $\ell$ in the point $\gamma(s_0)$. On the other hand, if $x(s_0) = \frac{\pi}{2}$, then $E(s_0) = x'(s_0)$, so $\Phi(s_0,t)$ is a regular curve if and only if $x'(s_0)\neq 0$. In particular, there is no surface of revolution for which $x(s)$ is constant $\frac{\pi}{2}$ (the profile curve $\gamma$ would degenerate to a point).

\begin{lemma}\label{lm:ode}
  Let $\gamma: I \subset \mathbb{R} \to H \subset \mathbb S^2(1): s \mapsto \bigl(e^{iy(s)}\cos x(s) , \sin x(s) \bigr)$ be the profile curve of a rotationally invariant surface with Gauss curvature~$K$ in $\mathbb{S}^3_\tau$. After a suitable reparametrization, there exists a function $\alpha: I \to \mathbb{R}$ such that $x$, $y$ and $\alpha$ satisfy the following system of ordinary differential equations:
  \begin{equation}\label{eq:ODE-Rotationally-invariant-surface}
  \begin{split}
    x' &= \cos \alpha, \\
    y' &= \frac{1}{\tau} \frac{\sqrt{1 \! - \! \lambda\sin^2 x}}{\cos x} \sin \alpha, \\
    \alpha' &= \frac{\tan x}{\sin \alpha}\left[ \frac{1 - \lambda\sin^2 x}{1 - 2 \lambda\sin^2 x}K - \cos^2 \alpha \left( \frac{1 - \lambda}{1  - \lambda \sin^2 x} + \frac{4 \lambda \cos^2 x}{1  - 2 \lambda\sin^2 x} \right)\right],
  \end{split}
\end{equation}
where $\lambda = 1 - \tau^2$ (see~\eqref{eq:lambda}). 

Moreover, if $K$ is constant and $(x(s),y(s),\alpha(s))$ is a solution of \eqref{eq:ODE-Rotationally-invariant-surface}, the quantity
\begin{equation}\label{eq:integral-formula}
    \mathcal E = \frac{(1 - 2\lambda \sin^2 x)^2}{1 \! - \! \lambda \sin^2 x} \cos^2 x \cos^2 \alpha + K(1 - \lambda \sin^2 x)\sin^2 x
\end{equation}
is also constant and we call it the \emph{energy} of the solution.
\end{lemma}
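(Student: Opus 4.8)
The plan is to use the parametrization $\Phi$ together with the first fundamental form \eqref{eq:components_1st_fund} to turn the constant Gauss curvature condition into an ODE, all of it intrinsic. First I would compute $EG-F^2$; a short computation shows that the terms quadratic in $\lambda$ cancel and
\[
  EG-F^2 = \sin^2 x\,\bigl[(1-\lambda\sin^2 x)(x')^2 + \tau^2\cos^2 x\,(y')^2\bigr].
\]
On the open set where $\sin x>0$ (its complement has empty interior, since otherwise the profile curve would contain a subarc of the axis and its $\mathrm{Rot}$-orbit would not be a surface) one has both $EG-F^2>0$ and $G>0$, so one may reparametrize $\gamma$ so that $EG-F^2=G$, i.e.\ so that the component of $\Phi_s$ orthogonal to the orbits has unit length; equivalently, $s$ becomes arclength on $\mathbb{S}^3_\tau/\mathrm{Rot}$ with the quotient metric. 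This condition reads $(x')^2 + \tfrac{\tau^2\cos^2 x}{1-\lambda\sin^2 x}(y')^2 = 1$ (note $1-\lambda\sin^2 x>0$ always, since $\lambda<1$), so one may define $\alpha$ by $x'=\cos\alpha$; then $y' = \tfrac1\tau\tfrac{\sqrt{1-\lambda\sin^2 x}}{\cos x}\sin\alpha$ up to the sign of $\sin\alpha$, which yields the first two equations of \eqref{eq:ODE-Rotationally-invariant-surface}.

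For the third equation I would remove the cross term $F$ by the shear $\tilde t = t + \int \tfrac{F}{G}\,ds$, after which the induced metric becomes $ds^2 + G\,d\tilde t^2$ with $\sqrt{G(s)} = \sin x(s)\sqrt{1-\lambda\sin^2 x(s)}$, a positive function on $\{\sin x>0\}$. The intrinsic Gauss curvature of such a metric is $K=-\frac{(\sqrt G)''}{\sqrt G}$, so constant Gauss curvature $K$ is equivalent to the linear equation $(\sqrt G)'' + K\sqrt G = 0$. Writing $\sqrt G = \rho(x)$ and differentiating twice via $x'=\cos\alpha$, $x''=-\alpha'\sin\alpha$, this becomes $\rho''(x)\cos^2\alpha - \rho'(x)\sin\alpha\,\alpha' = -K\rho(x)$; solving for $\alpha'$ and inserting
\[
  \frac{\rho(x)}{\rho'(x)} = \frac{\tan x\,(1-\lambda\sin^2 x)}{1-2\lambda\sin^2 x},
  \qquad
  \frac{\rho''(x)}{\rho'(x)} = -\tan x\Bigl(\frac{1-\lambda}{1-\lambda\sin^2 x} + \frac{4\lambda\cos^2 x}{1-2\lambda\sin^2 x}\Bigr),
\]
which follow from $\rho'(x) = \tfrac{\cos x(1-2\lambda\sin^2 x)}{\sqrt{1-\lambda\sin^2 x}}$, gives exactly the third equation. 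At the (interior-free) set of points with $\sin x=0$ the system then extends by continuity, using that the surface is smooth there.

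For the energy, observe that $(\sqrt G)' = \rho'(x)\cos\alpha = \tfrac{\cos x\,(1-2\lambda\sin^2 x)}{\sqrt{1-\lambda\sin^2 x}}\cos\alpha$, hence
\[
  \bigl((\sqrt G)'\bigr)^2 + K\,(\sqrt G)^2 = \frac{(1-2\lambda\sin^2 x)^2}{1-\lambda\sin^2 x}\cos^2 x\cos^2\alpha + K(1-\lambda\sin^2 x)\sin^2 x = \mathcal E.
\]
When $K$ is constant, along any solution of \eqref{eq:ODE-Rotationally-invariant-surface} the third equation is equivalent to $(\sqrt G)'' + K\sqrt G = 0$ (at points with $\sin\alpha\ne0$, hence everywhere by continuity), so $\mathcal E' = 2(\sqrt G)'\bigl[(\sqrt G)'' + K\sqrt G\bigr] = 0$. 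In other words, $\mathcal E$ is the standard conserved quantity of the harmonic-oscillator equation satisfied by $\sqrt G$.

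The main obstacle is organizational rather than conceptual: one must execute the cancellations cleanly (the vanishing of the $\lambda^2$-terms in $EG-F^2$, and the simplification of $\rho''/\rho'$ and $\rho/\rho'$ to the stated rational expressions), and one must be careful about the reparametrization and the smoothness of $\alpha$ at points where the profile curve meets the axis of revolution, where $\Phi$ degenerates even though the surface remains regular; there $\alpha$ has to be checked to extend smoothly so that the system and the conservation of $\mathcal E$ hold on all of $I$.
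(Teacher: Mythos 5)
Your proposal is correct and follows essentially the same route as the paper: normalize the parametrization so that $EG-F^2=G$, introduce $\alpha$, reduce the constant curvature condition to $(\sqrt G)''+K\sqrt G=0$ (your explicit shear $\tilde t = t+\int \tfrac FG\,ds$ is just an unpacking of the paper's appeal to the Frobenius formula), and read off $\mathcal E$ as the first integral of that oscillator equation. The intermediate identities you record ($EG-F^2$, $\rho'$, $\rho/\rho'$, $\rho''/\rho'$) all check out, so no further comment is needed.
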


\begin{proof}
We parametrize the profile curve $\gamma: I \to H$ such that
\[
x'(s)^2 + \frac{(1- \lambda)\cos^2 x(s)}{1- \lambda \sin^2x(s)}y'(s)^2 = 1.
\] 
As a consequence, there exists a smooth function $\alpha: I \to \mathbb{R}$ such that the first two equations of~\eqref{eq:ODE-Rotationally-invariant-surface} are satisfied. Moreover, a straightforward computation using \eqref{eq:components_1st_fund} shows that $E(s)G(s) - F(s)^2 = G(s)$ and, if we define $\varphi(s) = \sqrt{G(s)}$, the formula of Frobenius (expressing the Gauss curvature in terms of the components of the first fundamental form and their derivatives) gives $\varphi''(s) + K \varphi(s) = 0$, which implies the third equation of \eqref{eq:ODE-Rotationally-invariant-surface}.

Now, if $K$ is constant, the first integral of $\varphi''(s) + K \varphi(s) = 0$ is precisely $\varphi'(s)^2 + K \varphi(s)^2 = \mathcal E$ for some constant $\mathcal E$. This is equivalent to~\eqref{eq:integral-formula}. 
\end{proof}

\begin{remark}\label{rmk:symmetries-ODE-system}
  In the round sphere case, that is, for $\lambda = 0$, the system~\eqref{eq:ODE-Rotationally-invariant-surface} was studied by Hsiang~\cite{Hsiang1983}. We now describe the symmetries of the system \eqref{eq:ODE-Rotationally-invariant-surface} in general. Let $(x,y,\alpha)$ be a solution, $I$ its maximal interval of definition, and $s_0 \in I$.
\begin{enumerate}[(i)]

  \item Translations along the $y$-axis produce new solutions, that is, $(x, y + y_0, \alpha)$ is a solution of~\eqref{eq:ODE-Rotationally-invariant-surface} for any constant $y_0$. This corresponds to \emph{vertical} translations (the $1$-parameter group of diffeomorphisms generated by the Killing field $\xi$) of the profile curve in Berger spheres, which are isometries for any $\tau$. 

  \item The function $\alpha$ is determined up to an integer multiple of $2\pi$, that is, $(x, y, \alpha + 2k\pi)$ is a solution of~\eqref{eq:ODE-Rotationally-invariant-surface} for any constant $k\in\mathbb Z$.

  \item Reversal of the parameter produces a solution, that is, $\bigl(x(2s_0 - s), y(2s_0 - s), \alpha(2s_0 - s) + \pi\bigr)$ is a solution of~\eqref{eq:ODE-Rotationally-invariant-surface} defined in a neighborhood of $s_0$.

  \item Reflection in a line $y = y_0$ also produces a solution, that is, $(x, 2y_0 - y, -\alpha)$, is a solution of \eqref{eq:ODE-Rotationally-invariant-surface} for any constant $y_0$.

  \item If $x'(s_0) = 0$ then the solution is symmetric with respect the line $y = y(s_0)$. More precisely, the condition $x'(s_0) = 0$ implies by \eqref{eq:ODE-Rotationally-invariant-surface} that $\alpha(s_0) \in \{-\frac{\pi}{2}, \frac{\pi}{2}\}$. Hence, the two solutions $(x,y,\alpha)$ and $(\hat{x}, \hat{y}, \hat{\alpha})$ where
    \[
      \begin{split}
        \hat{x}(s) &= x(2s_0 - s), \quad \hat{y}(s) = 2y(s_0) - y(2s_0 - s), \\ 
        \hat{\alpha}(s) &= 
        \begin{cases}
          \pi - \alpha(2s_0 - s) & \text{if } \alpha(s_0) = \tfrac\pi2,\\
          -\pi - \alpha(2s_0 - s) & \text{if } \alpha(s_0) = -\tfrac\pi2,
        \end{cases}
      \end{split}
    \]
    coincide at $s_0$ so they are equal.

  \item Finally, if $\sin x(s_0) = 1$, that is, the profile curve $\gamma$ contains the north pole of the hemisphere $H \subset \mathbb S^2(1)$, then we can smoothly continue $\gamma$ by considering $(x, y + \pi, \alpha)$. This corresponds to a rotation over an angle $\pi$ of $\gamma$ around the north pole of $H$ or, equivalently, to a translation of $\pi$ along the fibers in the Berger sphere.

\end{enumerate}

From the above we can always assume that $y_0 = 0$ and that for any $(x_0, \alpha_0)\in [0, \frac{\pi}{2}[ \times [0, \pi[$ there exists a solution of~\eqref{eq:ODE-Rotationally-invariant-surface} with constant Gauss curvature $K$ and energy $\mathcal E$ given by~\eqref{eq:integral-formula}. Moreover, the corresponding rotationally invariant surface is complete if $\gamma$ is defined for all $s$ or if the pair $(\sin^2 x, \cos \alpha)$ has either $(0, \pm1)$ or $(1, a)$, for any $a \in [-1,1]$, as limit values. In the first case the profile curve $\gamma$ touches the axis of revolution orthogonally, while in the second case the curve contains the north pole and it can be continued as in (vi).
\end{remark}

\begin{lemma}\label{lm:constant-solutions}
The only solutions of~\eqref{eq:ODE-Rotationally-invariant-surface} for which $x$,  $y$ or  $\alpha$ is constant are
\begin{enumerate}[(i)] 
\item solutions of the form 
$$ (x(s),y(s),\alpha(s)) = \left( x_0, \ \frac{1}{\tau} \frac{\sqrt{1-\lambda \sin^2 x_0}}{\cos x_0}s, \ \frac{\pi}{2} \right) $$ 
for some constant $x_0$ which is not an integer multiple of $\frac{\pi}{2}$;
\item in the case of a round sphere, solutions of the form
$$ (x(s),y(s),\alpha(s)) = (s, y_0, 0) $$
for some constant $y_0$.
\end{enumerate}
Solutions of type (i) correspond to \emph{Clifford tori}, i.e., preimages of constant curvature curves under the Hopf fibration, which are flat. Solutions of type (ii) only appear for round spheres, i.e., when $\tau = 1$, and correspond to totally geodesic $2$-spheres.
\end{lemma}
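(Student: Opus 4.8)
The plan is a direct case analysis according to which of the three functions $x$, $y$, $\alpha$ is the constant one, feeding this information into the three equations of~\eqref{eq:ODE-Rotationally-invariant-surface} and normalising by the symmetries collected in Remark~\ref{rmk:symmetries-ODE-system}. In every case the requirement $\alpha'\equiv 0$ will, after clearing denominators, become a polynomial identity in $\sin^2 x$ that must hold on a nondegenerate interval; comparing coefficients is the computational core of the argument.

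Suppose first that $x\equiv x_0$. The first equation gives $\cos\alpha\equiv 0$, hence $\alpha$ is constant, and using the reflection symmetry (iv) and the $2\pi$-periodicity (ii) of Remark~\ref{rmk:symmetries-ODE-system} we may assume $\alpha\equiv\frac{\pi}{2}$. Substituting $\cos\alpha=0$ into the third equation, the condition $\alpha'=0$ reads $\tan x_0\cdot\frac{(1-\lambda\sin^2 x_0)K}{1-2\lambda\sin^2 x_0}=0$. Since $\lambda<1$ forces $1-\lambda\sin^2 x_0>0$, and since $x_0\in\frac{\pi}{2}\mathbb{Z}$ is geometrically excluded (these values give the axis of revolution or a degenerate parametrisation, as observed before Lemma~\ref{lm:ode}), we get $\tan x_0\neq 0$ and therefore $K=0$. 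Then $y'$ is the constant indicated in (i) and, after a vertical translation (symmetry (i) of Remark~\ref{rmk:symmetries-ODE-system}), we obtain precisely family (i).

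Now suppose $y\equiv y_0$, or $\alpha\equiv\alpha_0$ with $\cos\alpha_0\neq 0\neq\sin\alpha_0$. In the first case the second equation forces $\sin\alpha\equiv 0$, so by symmetry (iii) we may take $\alpha\equiv 0$; in the second case $x'=\cos\alpha_0$ is a nonzero constant, so $x$ is a non-constant affine function of $s$ and $\tan x$ does not vanish identically. Either way $\alpha'\equiv 0$ forces the bracketed expression in the third equation of~\eqref{eq:ODE-Rotationally-invariant-surface} to vanish for all $x$ in an interval (the apparent singularity $\sin\alpha\to 0$ in the case $\alpha\equiv 0$ is resolved the same way, since the bracket depends on $\alpha$ only through $\cos^2\alpha$). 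Writing $u=\sin^2 x$ and $c=\cos^2\alpha_0\in(0,1]$ and clearing denominators, this reads
\[
K(1-\lambda u)^2 \;=\; c\bigl[(1-\lambda)(1-2\lambda u)+4\lambda(1-u)(1-\lambda u)\bigr]
\]
on a nondegenerate $u$-interval. Matching the coefficients of $u^2$, $u^1$, $u^0$ gives $\lambda^2(K-4c)=0$, $\lambda(3c+\lambda c-K)=0$ and $K=c(1+3\lambda)$. If $\lambda\neq 0$, the first and third of these yield $c(1-\lambda)=0$, impossible since $c>0$ and $\lambda<1$; hence there are no such solutions in the non-round Berger spheres. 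If $\lambda=0$, only $K=c$ remains, and in the sub-case $c=1$ (that is, $\alpha\equiv 0$, and $x(s)=s$ after a parameter translation) this is exactly family (ii), the totally geodesic $2$-sphere of the round sphere. Together with the first case this shows that for $\lambda\neq 0$ the Clifford tori are the only constant-coordinate solutions.

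The delicate point, which I expect to be the main obstacle, is the remaining sub-case $\lambda=0$ with $c=\cos^2\alpha_0\in(0,1)$: here the coefficient comparison leaves only the constraint $K=\cos^2\alpha_0$, so one must show that the corresponding solution does not produce a genuine new rotationally invariant surface to be added to the list. Geometrically its profile is a loxodrome on $\mathbb{S}^2(1)$ meeting the meridians of $H$ at the constant angle $\alpha_0$; it spirals towards the pole of $H$ as $x\to\frac{\pi}{2}$ and reaches the axis of revolution non-orthogonally, so it does not close up to a complete smooth surface (these are Hsiang's incomplete round-sphere examples with $0<K<1$, see~\cite{Hsiang1983}). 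Once this sub-case is disposed of, collecting the three cases gives exactly the solutions (i) and (ii); identifying family (i) with the flat Clifford tori and family (ii) with the totally geodesic $2$-spheres is then a direct computation of the induced metric and second fundamental form. The energy~\eqref{eq:integral-formula} plays no role here, since it is automatically constant along any solution with constant coordinate functions.
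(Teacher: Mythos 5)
Your argument is essentially the paper's: both reduce everything to the case of constant $\alpha$ (constancy of $x$ or $y$ forces constancy of $\alpha$ via the first two equations of \eqref{eq:ODE-Rotationally-invariant-surface}), substitute into the third equation, and turn $\alpha'\equiv 0$ into the identity $K(1-\lambda u)^2=\cos^2\alpha_0\,\bigl[4\lambda^2u^2-2\lambda(\lambda+3)u+3\lambda+1\bigr]$ in $u=\sin^2 x$, valid on a nondegenerate interval, concluding $\cos\alpha_0=0$ or $\lambda=0$ by comparing coefficients. Your computations are correct, and your explicit extraction of $K=0$ in the constant-$x$ case is a detail the paper leaves implicit. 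The one place you diverge is the sub-case you yourself flag as delicate, $\lambda=0$ with $\cos^2\alpha_0\in(0,1)$, and you are right that it is not vacuous: for $K=\cos^2\alpha_0\in(0,1)$ the loxodromes $\alpha\equiv\alpha_0$, $x(s)=x_0+s\cos\alpha_0$, $y'=\sin\alpha_0/\cos x$ genuinely solve \eqref{eq:ODE-Rotationally-invariant-surface} with $\alpha$ constant and are of neither type (i) nor type (ii). The paper's proof absorbs them silently into ``these two cases immediately lead to the two cases described in the lemma,'' so the statement as literally written (about solutions of the \textsc{ode} system, with no completeness hypothesis) omits this family in the round case; this is an imprecision of the lemma rather than a gap you created. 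Your way out --- these profile curves spiral into the pole and meet the axis non-orthogonally, hence never close up to a complete surface --- does not prove the lemma as stated, but it is exactly the observation needed at the two points where the lemma is invoked (Proposition~\ref{prop:CGC-spheres}, where $\mathcal E=1$ is forced, and the completeness argument in the main theorem; note their energy is $\mathcal E=K\neq 1$ by \eqref{eq:integral-formula}). If you want a watertight statement, either add the Hsiang loxodromes \cite{Hsiang1983} as a third, incomplete family occurring only for $\tau=1$ and $0<K<1$, or phrase the lemma for complete surfaces only.
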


\begin{proof}
If $x$ or $y$ are constant, the first or second equation of \eqref{eq:ODE-Rotationally-invariant-surface} shows that $\alpha$ is also constant. So we only have to investigate the case where $\alpha$ is constant.

If $\alpha(s)=\alpha_0$, the first equation of~\eqref{eq:ODE-Rotationally-invariant-surface} gives $x(s) = (\cos \alpha_0) s + x_0$ and from the third equation of~\eqref{eq:ODE-Rotationally-invariant-surface} we deduce that 
  \[
    K = \frac{\cos^2 \alpha_0}{(1 - \lambda \sin^2 x(s))^2}
    \left[ 4\lambda^2 \sin^4 x(s) - 2\lambda(\lambda + 3) \sin^2 x(s) + 3\lambda + 1 \right].
  \] 
Since $K$ is constant, either $\cos \alpha_0 = 0$ or $\lambda=0$. These two cases immediately lead to the two cases described in the lemma.
\end{proof}

\begin{proposition}\label{prop:CGC-spheres}
There exists a rotationally invariant sphere $S_K$ with constant Gauss curvature $K$ in the Berger sphere $\mathbb{S}^3_\tau$ if and only if $K \ge  K_0$, where $K_0 = 4 - 3\tau^2$ if $\tau \leq 1$ and $K_0 = \frac{1}{\tau^2}$ if $\tau > 1$. Moreover, $S_K$ is unique up to ambient isometries and is embedded if and only if
   \begin{equation} \label{eq:condition-embeddedness}
   \frac{1}{\tau} \! \int_0^{r} \frac{\sqrt{\cos^2 x (1 \! - \! 2\lambda \sin^2 x)^2 \! - \! (1 \! - \! \lambda \sin^2 x)( 1 \! - \! K(1 \! - \! \lambda \sin^2 x)\sin^2 x )}}{\cos x \sqrt{1 \! - \! K(1 \! - \! \lambda \sin^2 x)\sin^2 x}} \mathrm{d}x < \pi,
  \end{equation}
where $r$ is determined by $\sin^2 r = \frac{1}{2\lambda}\bigl( 1 - \sqrt{1 - \frac{4}{K}\lambda} \bigr)$ if $\lambda \neq 0$ or by $\sin^2 r = \frac{1}{K}$ if $\lambda = 0$.
\end{proposition}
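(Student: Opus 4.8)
The plan is to combine the conservation of energy from Lemma~\ref{lm:ode} with a careful study of the profile curve $\gamma$. A rotationally invariant sphere $S$ meets the axis $\ell$ in exactly two points, since the circle action $\mathrm{Rot}$ on $S\cong\mathbb S^2$ has exactly two fixed points and they lie on $\ell$; so $\gamma$ is an arc in $H$ with both endpoints on $\partial H=\ell$, meeting $\ell$ orthogonally there by smoothness of $S$. Orthogonality means $y'=0$, hence $\sin\alpha=0$ and $\alpha\in\{0,\pi\}$, at a point with $\sin x=0$; feeding this into~\eqref{eq:integral-formula} gives $\mathcal E=\cos^2\alpha=1$, so every rotationally invariant sphere has energy $1$. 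Normalising by the symmetries of Remark~\ref{rmk:symmetries-ODE-system} so that $x(0)=y(0)=0$, $\alpha(0)=0$, the relation $\mathcal E=1$ expresses $\cos^2\alpha$ as an explicit function of $x$,
\[
\cos^2\alpha=R(x):=\frac{(1-\lambda\sin^2 x)\bigl(1-K(1-\lambda\sin^2 x)\sin^2 x\bigr)}{(1-2\lambda\sin^2 x)^2\cos^2 x},
\]
which together with~\eqref{eq:ODE-Rotationally-invariant-surface} determines $\gamma$ up to the admitted symmetries; this yields uniqueness once existence is shown. The profile is a genuine immersed curve exactly while $R(x)\in[0,1]$.

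Now I would decide when the energy-$1$ solution closes up into a sphere. With $u=\sin^2 x$, the curve turns around ($x'=\cos\alpha=0$) at the first zero of $R$, namely the smallest positive root of $Ku(1-\lambda u)=1$ (here $1-\lambda u>0$ since $\lambda<1$), which one identifies with the value $\sin^2 r$ of the statement; past it, $\gamma$ continues as its own mirror image by Remark~\ref{rmk:symmetries-ODE-system}(v) and returns orthogonally to $\ell$, giving a sphere, provided that (a) $R(x)\le 1$ on $[0,r]$ and (b) the turning point is reached, i.e.\ $\sin^2 r\le 1$. For (a), the expansion $R(x)=1+(4-3\tau^2-K)\sin^2 x+O(\sin^4 x)$ shows $R\le 1$ near $\ell$ already forces $K\ge 4-3\tau^2$; writing $1-R(x)=\sin^2 x\,g(\sin^2 x)/\bigl((1-2\lambda\sin^2 x)^2\cos^2 x\bigr)$ with $g$ an explicit quadratic satisfying $g(0)=K-(4-3\tau^2)$ and $g(\sin^2 r)>0$, a short case discussion (separating $K<4$, where $g$ is concave, from $K\ge 4$, and the sign of $\lambda$) gives $g\ge 0$ on $[0,\sin^2 r]$ whenever $K\ge 4-3\tau^2$. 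For (b), an elementary computation gives $\sin^2 r\le 1\iff K\ge\frac1{\tau^2}$ when $\lambda\le\frac12$, while $\sin^2 r\le 1$ holds automatically when $\lambda>\frac12$. Comparing the thresholds $4-3\tau^2$ and $\frac1{\tau^2}$ then shows that the effective lower bound is $4-3\tau^2$ for $\tau\le 1$ and $\frac1{\tau^2}$ for $\tau>1$, i.e.\ exactly $K\ge K_0$. Conversely, if $K<K_0$ there is no sphere: for $\tau<1$ one has $R(x)>1$ for $x$ near $0$, so the energy-$1$ solution cannot touch $\ell$; for $\tau>1$ one has $\sin^2 r>1$, so $\gamma$ cannot close up because it would have to be continued to the north pole of $H$, where $R$ (and the $y$-coordinate) degenerate since $1-K(1-\lambda)\neq 0$; finally $K\le 0$ is excluded since $\varphi=\sqrt G$ solves $\varphi''+K\varphi=0$ and cannot vanish at two distinct parameters.

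For the embeddedness part, work in a chart on $H$ where $\cos x\ge 0$ along $\gamma$; then $\Phi$ is injective if and only if $s\mapsto\bigl(x(s),y(s)\bmod 2\pi\bigr)$ is injective away from the axis. Since $\gamma$ is mirror symmetric about its turning point and $y$ is monotone along it, its total $y$-variation is $Y=2\bigl(y(r)-y(0)\bigr)$; substituting $\tan^2\alpha=(1-R)/R$ into $\mathrm{d}y/\mathrm{d}x=y'/x'$ and simplifying gives
\[
\frac{Y}{2}=\frac1\tau\int_0^{r}\frac{\sqrt{\cos^2 x\,(1-2\lambda\sin^2 x)^2-(1-\lambda\sin^2 x)\bigl(1-K(1-\lambda\sin^2 x)\sin^2 x\bigr)}}{\cos x\,\sqrt{1-K(1-\lambda\sin^2 x)\sin^2 x}}\,\mathrm{d}x.
\]
The folded, $y$-monotone planar arc stays simple modulo $2\pi$ precisely when $Y<2\pi$: if $Y>2\pi$ the point where $y=\tfrac{Y}{2}-\pi$ is a genuine self-intersection, and $Y=2\pi$ forces the two poles $(1,0),(e^{iY},0)\in\ell$ of $S$ to coincide. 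Hence $S$ is embedded if and only if the integral above is $<\pi$, which is~\eqref{eq:condition-embeddedness}.

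The delicate point is the polynomial inequality, that $R(x)\le 1$ on all of $[0,r]$ — not merely near $\ell$ — exactly when $K\ge K_0$, together with the bookkeeping that welds the local condition at the axis, the global condition $\sin^2 r\le 1$, and the degeneration of $R$ at the north pole of $H$ into the single threshold $K_0$ in each of the regimes $\tau\le 1$ and $\tau>1$; the embeddedness criterion likewise needs some care about the global behaviour of $\gamma$ modulo $2\pi$, in particular at the borderline $Y=2\pi$.
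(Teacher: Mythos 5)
Your argument is, at bottom, the same as the paper's: the conserved quantity of Lemma~\ref{lm:ode} forces $\mathcal E=1$ for a sphere, and everything reduces to the level--$1$ set of $\mathcal F(\sin^2x,\cos\alpha)$. Your function $R$ is that level set solved for $\cos^2\alpha$, your requirement $R\le 1$ on $[0,r]$ is literally the paper's inequality $\mathcal F(X,\pm1)\ge 1$, your expansion $R=1+(4-3\tau^2-K)\sin^2x+O(\sin^4x)$ is the sign computation \eqref{eq:partials-J}, your turning point is \eqref{eq:value-x-equator-sphere}, your exclusion of $K\le 0$ via $\varphi''+K\varphi=0$ is sound, and your vertical--variation integral reproduces \eqref{eq:vertical-radius} and \eqref{eq:condition-embeddedness}. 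The uniqueness and embeddedness discussions are fine.

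The one step that does not close as written is the ``short case discussion'' for $g\ge 0$ on $[0,\sin^2 r]$, which is the heart of the existence half. Explicitly, $g(u)=(K-1-3\lambda)+2\lambda(2+2\lambda-K)u+(K-4)\lambda^2u^2$. Concavity does settle $K<4$, but for $K\ge 4$ the quadratic is convex and positivity at the two endpoints does \emph{not} imply positivity in between: the global minimum of $g$ equals $(1-\lambda)(4\lambda-K)/(K-4)$, which is strictly negative for $K>4$ and $\lambda\neq0$, so you must in addition show that the vertex $u^*=\frac{K-2-2\lambda}{(K-4)\lambda}$ lies outside $[0,\sin^2r]$ (it does: $u^*>\frac1\lambda\ge 1$ for $\lambda>0$ and $u^*\le 0$ for $\lambda<0$), and your sketch never addresses this. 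A cleaner route, and the one the paper takes, is to observe that $\partial g/\partial K=(1-\lambda u)^2\ge0$, so it suffices to verify the inequality at the threshold value of $K$, where the quadratic factors with manifestly nonnegative factors: for $K=3\lambda+1$ one gets $g(u)=\lambda(1-\lambda)u(2-3\lambda u)\ge0$ on $[0,\min(1,\tfrac1{2\lambda})]\supset[0,\sin^2r]$, and for $\lambda<0$, $K=\tfrac1{1-\lambda}$, one gets $g(u)=-\tfrac{\lambda}{1-\lambda}(1-u)\bigl(\lambda(4\lambda-3)u+2-3\lambda\bigr)\ge0$ on $[0,1]$. With that substitution your proof is complete and matches the published one.
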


\begin{remark}
  The right hand sides of the expressions for $\sin^2 r$ are always contained in $[0,1]$ if $K \geq K_0$, so $r$ is well defined. 
\end{remark}

\begin{proof}
In the proof, we will use the function 
\begin{equation} \label{eq:def_J}
\mathcal F : [0,1] \times [-1,1] \to \mathbb R : (X,Y) \mapsto \frac{(1 - 2\lambda X)^2}{1 -\lambda X}(1-X)Y^2 + K(1 - \lambda X)X,
\end{equation}
which, when comparing to \eqref{eq:integral-formula}, satisfies $\mathcal E = \mathcal F(\sin^2x,\cos\alpha)$. 

Let $(x, y, \alpha)$ be a solution of \eqref{eq:ODE-Rotationally-invariant-surface} defined on a maximal interval $I \subset \mathbb{R}$ and assume that $\alpha$ is not constant, i.e., the solution is not a Clifford torus nor a great sphere in the round case (see Lemma~\ref{lm:constant-solutions}). The theory of \textsc{ode}s implies that a solution of \eqref{eq:ODE-Rotationally-invariant-surface} can be extended through values for which $(x, \alpha)$ lies in the interior of $[0,\frac{\pi}{2}] \times [0,\pi]$. Hence, the set $\{(x(s), \alpha(s)): s \in I\}$ is a connected component of a level curve $\{ (x, \alpha) \in [0,\frac{\pi}{2}]\!\times\![0,\pi] : \mathcal F(\sin^2x,\cos\alpha)=\mathcal E \}$, where $\mathcal E$ is the energy of the solution.

  For a solution to represent a sphere, it must touch the axis of revolution orthogonally twice, i.e., it must satisfy $\sin x(s) \to 0$ and $\cos\alpha(s) \to \pm 1$ if $s$ tends to the boundary points of $I$. As a consequence, $\mathcal E = 1$ by \eqref{eq:integral-formula} and the rest of the proof is based on analyzing the level $1$ curves of the function $\mathcal F$. 

  Let $\Gamma = \{(X,Y,\mathcal F(X,Y)) \in \mathbb R^3 : (X,Y) \in [0,1]\times[-1,1]\}$ be the graph of $\mathcal F$ and $\Pi = \{(X,Y,1) \in \mathbb R^3 : (X,Y) \in [0,1]\times[-1,1]\}$ a part of the plane at height $1$ in $\mathbb R^3$. A rotationally invariant \textsc{cgc} $K$ sphere exists if and only if there is a connected curve in $\Gamma \cap \Pi$ joining the points $(0,-1,1)$ and $(0,1,1)$. Observe that $\mathcal F(0, \pm 1) = 1$, so $(0,\pm 1, 1)\in \Gamma \cap \Pi$. We distinguish two cases depending on the sign of $\lambda = 1 - \tau^2$. 

\textbf{Case A.} $\lambda \geq 0$ (equivalently $\tau \leq 1$). If $K < 4 - 3\tau^2$ then
  \begin{equation}\label{eq:partials-J}
    \frac{\partial \mathcal F}{\partial X}(0,1) = K - (4 - 3\tau^2) < 0 \quad\text{and}\quad \frac{\partial \mathcal F}{\partial Y}(0,1) = 2.
  \end{equation} 
As a consequence, $\Gamma$ is below $\Pi$ in a neighborhood of $(0,1,1)$ and hence also in a neighborhood $(0,-1,1)$ because $\mathcal F$ is even in $Y$. Therefore, there can be no curve in $\Gamma \cap \Pi$ joining $(0, -1, 1)$ and $(0,1,1)$ (see left column in Figure~\ref{fig:contourplots-J-lambda-positive} for a contour plot of $\mathcal F$ in this case). 
  
  \begin{figure}[htpb]
    \centering

    \includegraphics[width=0.3\textwidth]{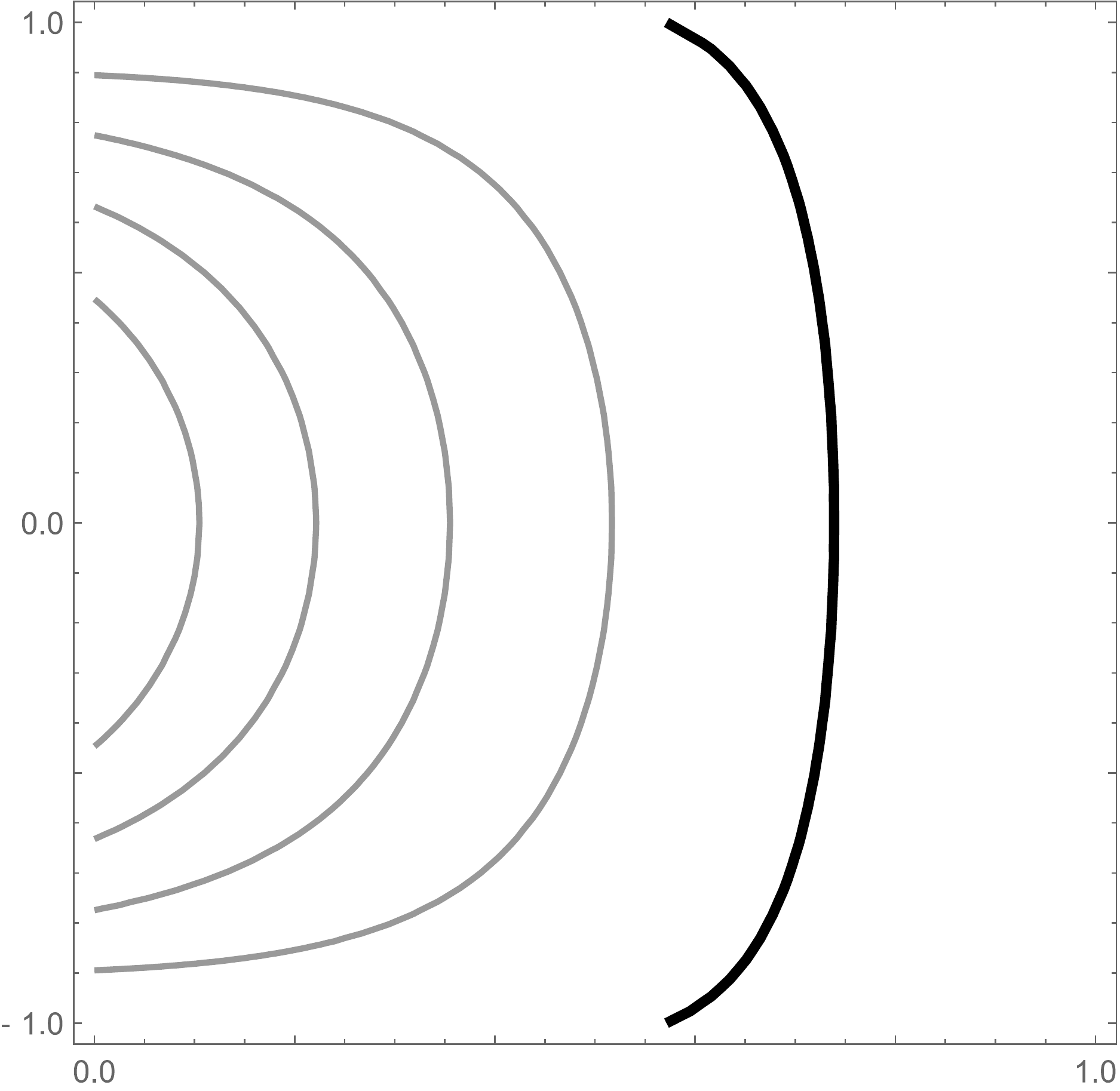}\quad
    \includegraphics[width=0.3\textwidth]{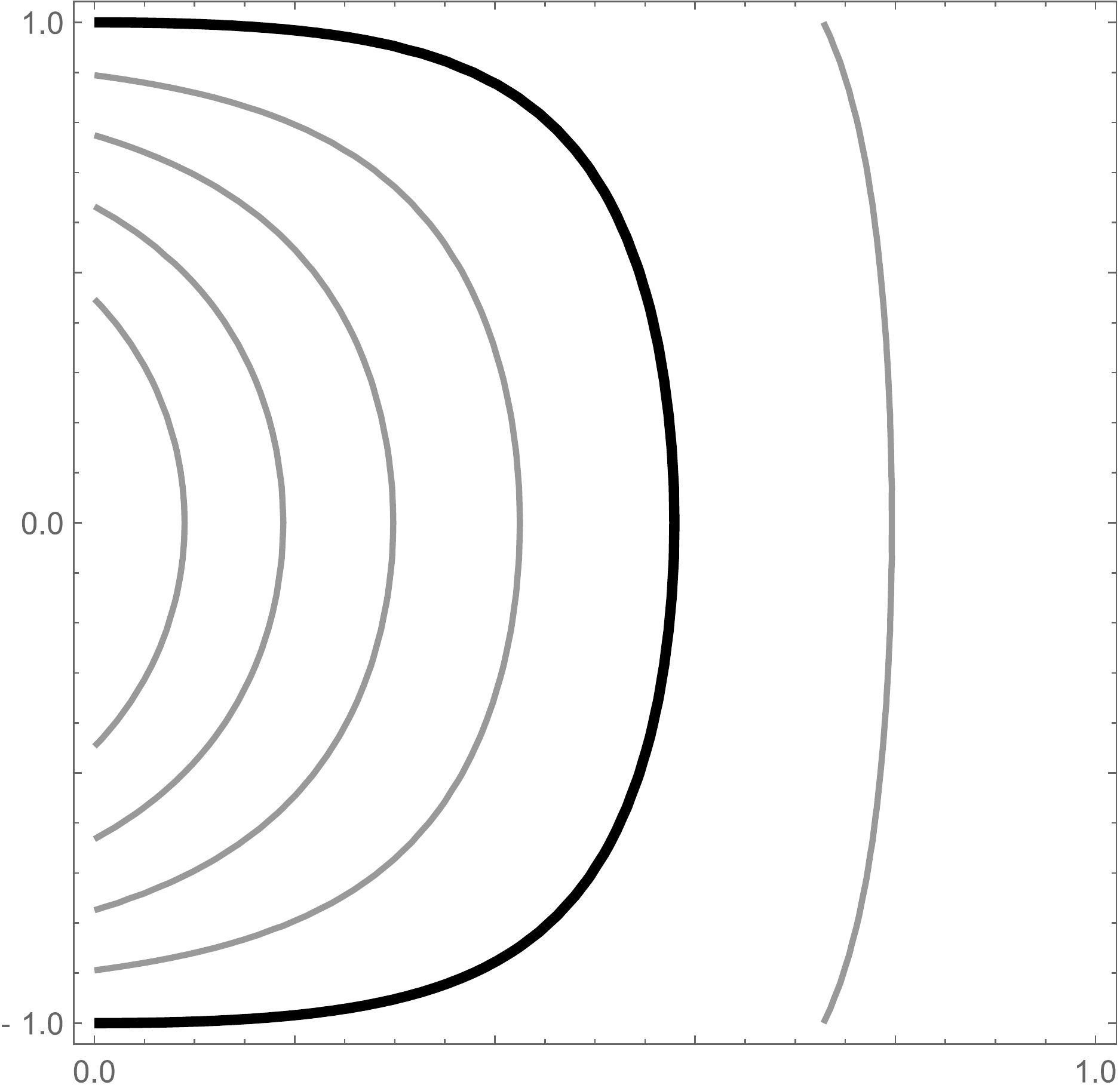}\quad
    \includegraphics[width=0.3\textwidth]{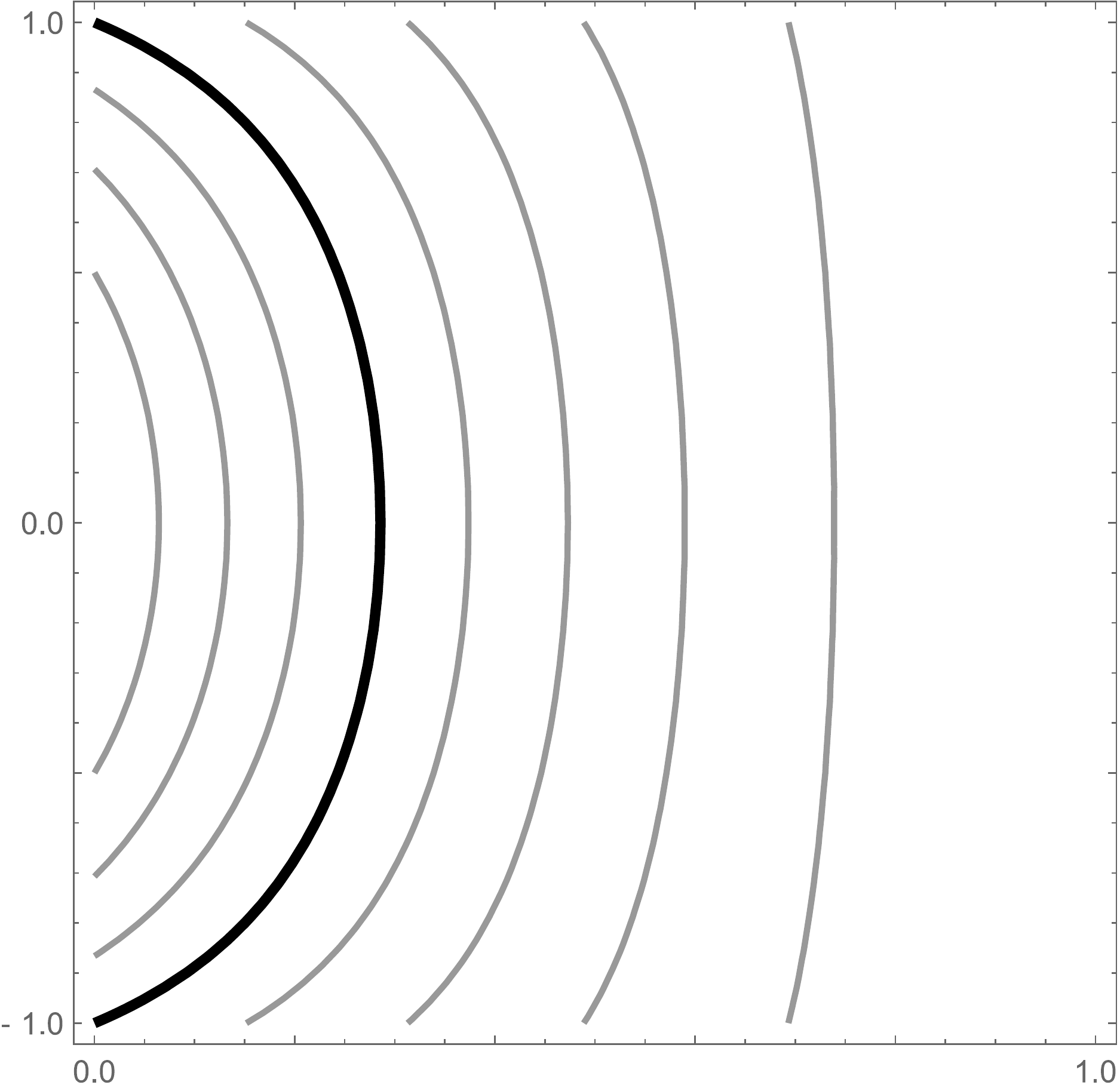}

    \includegraphics[width=0.3\textwidth]{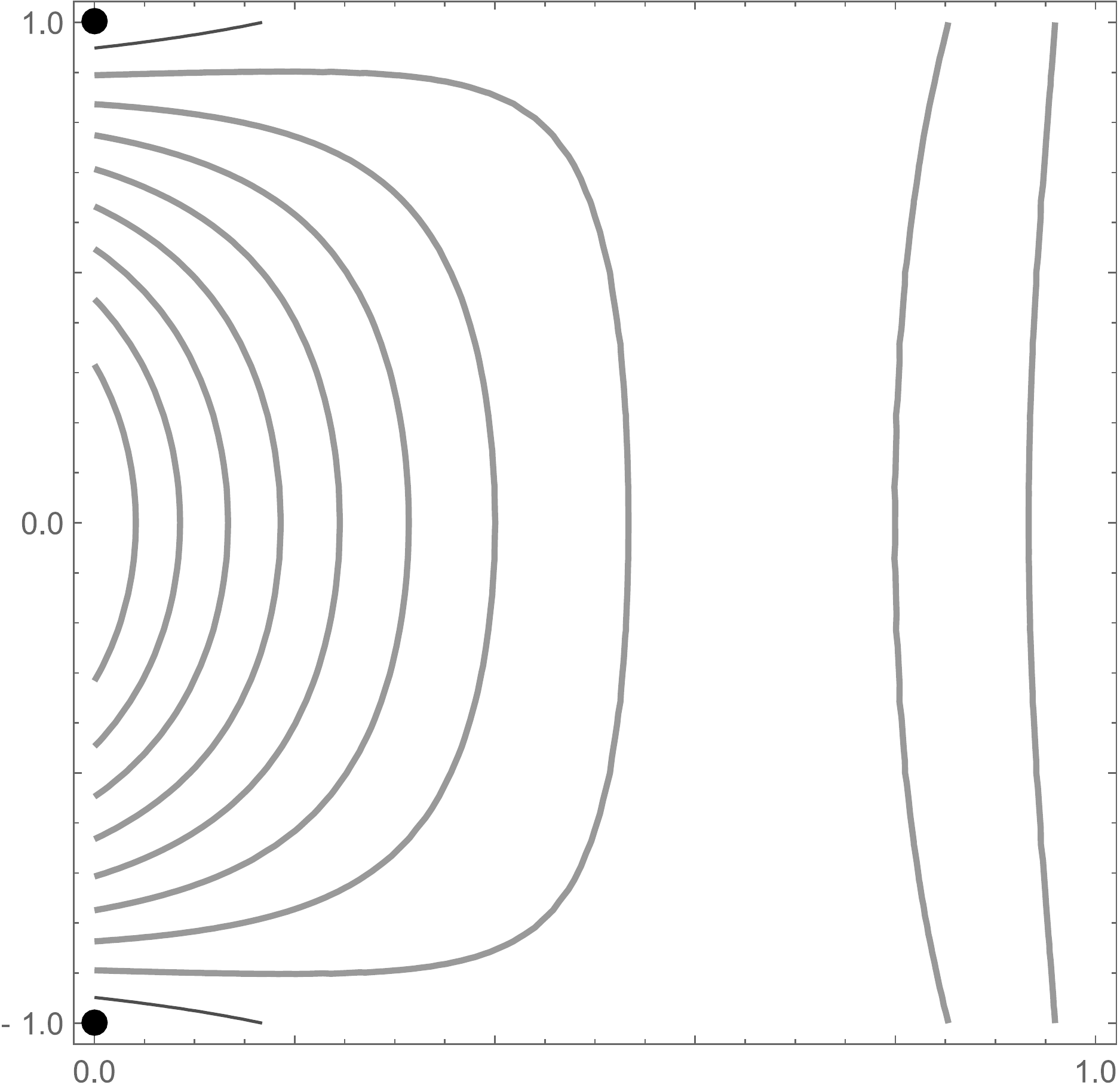}\quad
    \includegraphics[width=0.3\textwidth]{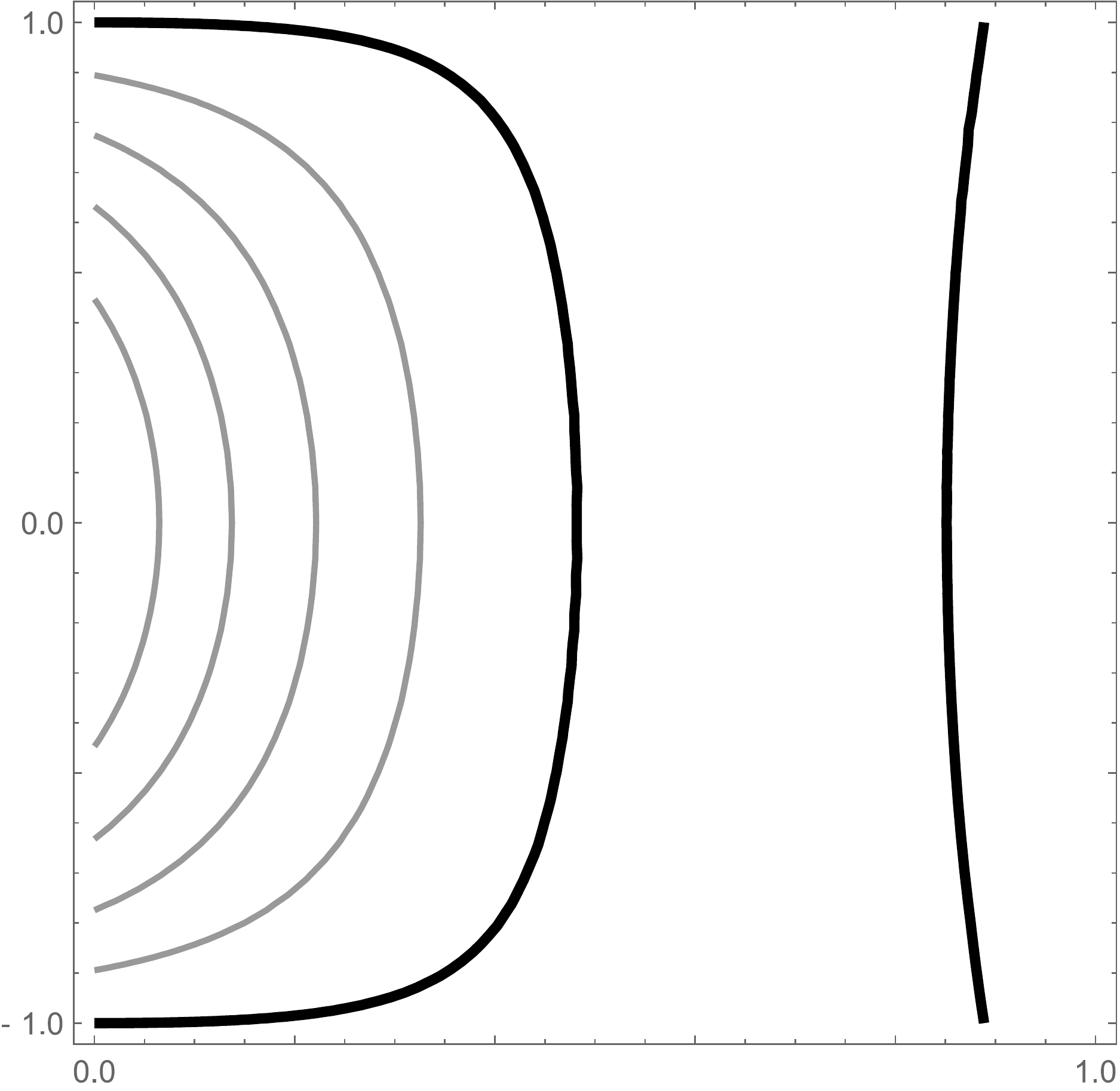}\quad
    \includegraphics[width=0.3\textwidth]{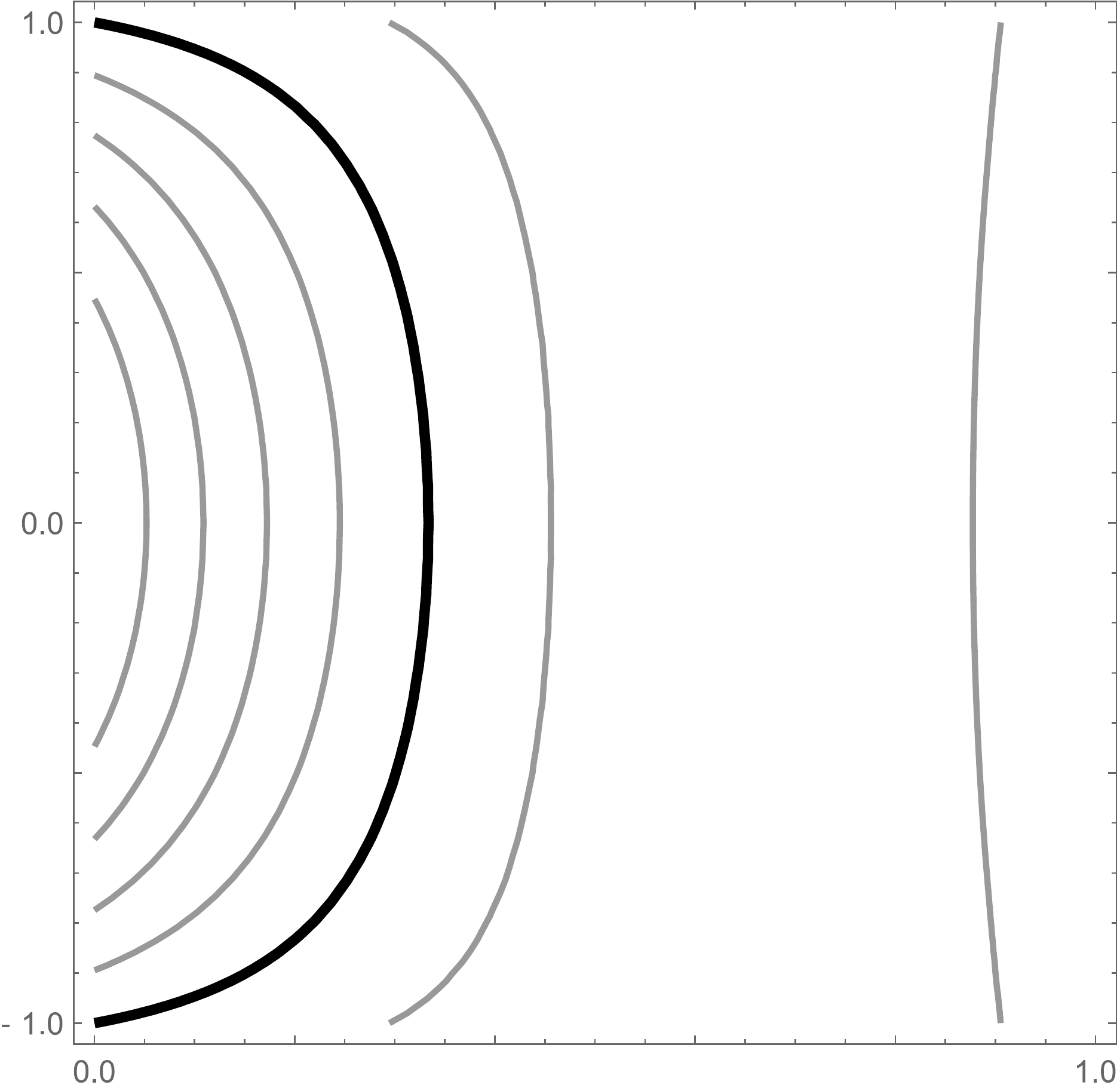}

    \caption{Contour plots of $\mathcal F$ for $K < 4 - 3\tau^2$ (left), $K = 4 - 3\tau^2$ (center) and $K > 4 - 3\tau^2$ (right) for $\tau = \frac{3}{4}$, so $\lambda = \frac{7}{16} < \frac{1}{2}$ (top row), and for $\tau = \frac{1}{2}$, so $\lambda = \frac{3}{4} > \frac{1}{2}$ (bottom row). The level $1$ curves are drawn in a bolder line.}
    \label{fig:contourplots-J-lambda-positive}
  \end{figure}

Let us now assume that $K \ge 4 - 3\tau^2$, equivalently $K \geq 3\lambda + 1$. On the one hand, we have that $\mathcal F(0,Y) = Y^2 \le 1$ for $Y \in [-1,1]$. On the other hand:
\[
\begin{split}
  \mathcal F(X, \pm 1) &= \frac{(1 - 2\lambda X)^2}{1 -\lambda X}(1-X) + K(1 - \lambda X)X \\
              &\geq \frac{(1 - 2\lambda X)^2}{1 -\lambda X}(1-X) + (3\lambda + 1)(1 - \lambda X)X \\
              &= 1 + \frac{\lambda(1-\lambda) X^2}{1 - \lambda X} (2 - 3 \lambda X),\\
\end{split}
\] 
where we have used the hypothesis on the Gauss curvature for the inequality. Since we are in the case $0 \leq \lambda < 1$, we get that $\mathcal F(X, \pm 1) \geq 1$ if $2-3\lambda X \geq 0$. We consider two cases:
  \begin{enumerate}
    \item If $0 \leq \lambda \le \frac{1}{2}$ then $2-3\lambda X \geq 0$ for all $X \in [0, 1]$ and thus $\mathcal F(X, \pm1) \geq 1$ for all $X \in [0, 1]$. Moreover, it is easy to check that $\mathcal F(1,Y) = K(1-\lambda) \ge 1$ because we are assuming $K\geq 3\lambda + 1$. The last two pictures in the top row of Figure~\ref{fig:contourplots-J-lambda-positive} are contour plots of $\mathcal F$ in this case.
    \item If $\frac{1}{2} < \lambda < 1$, then $2-3\lambda X \geq 0$ for all $X \in [0, \frac{1}{2\lambda}]$ and hence $\mathcal F(X, \pm1) \geq 1$ for all $X \in [0, \frac{1}{2\lambda}]$. Moreover, it is easy to check that $\mathcal F(\frac{1}{2\lambda},Y) = \frac{K}{4\lambda} \ge 1$ since we are assuming that $K \geq 3\lambda +1$. The last two pictures in the bottom row of Figure~\ref{fig:contourplots-J-lambda-positive} are contour plots of $\mathcal F$ in this case.
  \end{enumerate}
  In the first case, we get that $\Gamma$ is above $\Pi$ along three of its boundary curves and below along the fourth. In the second case, a similar argument can be carried out in the rectangle $[0, \frac{1}{2\lambda}]\times[-1,1]$. As a consequence, there exists a curve in $\Gamma \cap \Pi$ joining $(0,-1,1)$ and $(0,1,1)$.

\textbf{Case B.} $\lambda < 0$ (equivalently $\tau > 1$). In this case, if $K < \frac{1}{\tau^2}$ then $\mathcal F(X,0) = K(1-\lambda X)X \le K(1-\lambda) < 1$ for all $X \in [0,1]$. Hence, there can be no curve in $\Gamma \cap \Pi$ joining the points $(0,-1,1)$ and $(0,1,1)$ (see the first picture of Figure~\ref{fig:contourplots-J-lambda-negative} for a contour plot of $\mathcal F$ in this case). 

  \begin{figure}[htpb]
    \centering
    \includegraphics[width=0.3\textwidth]{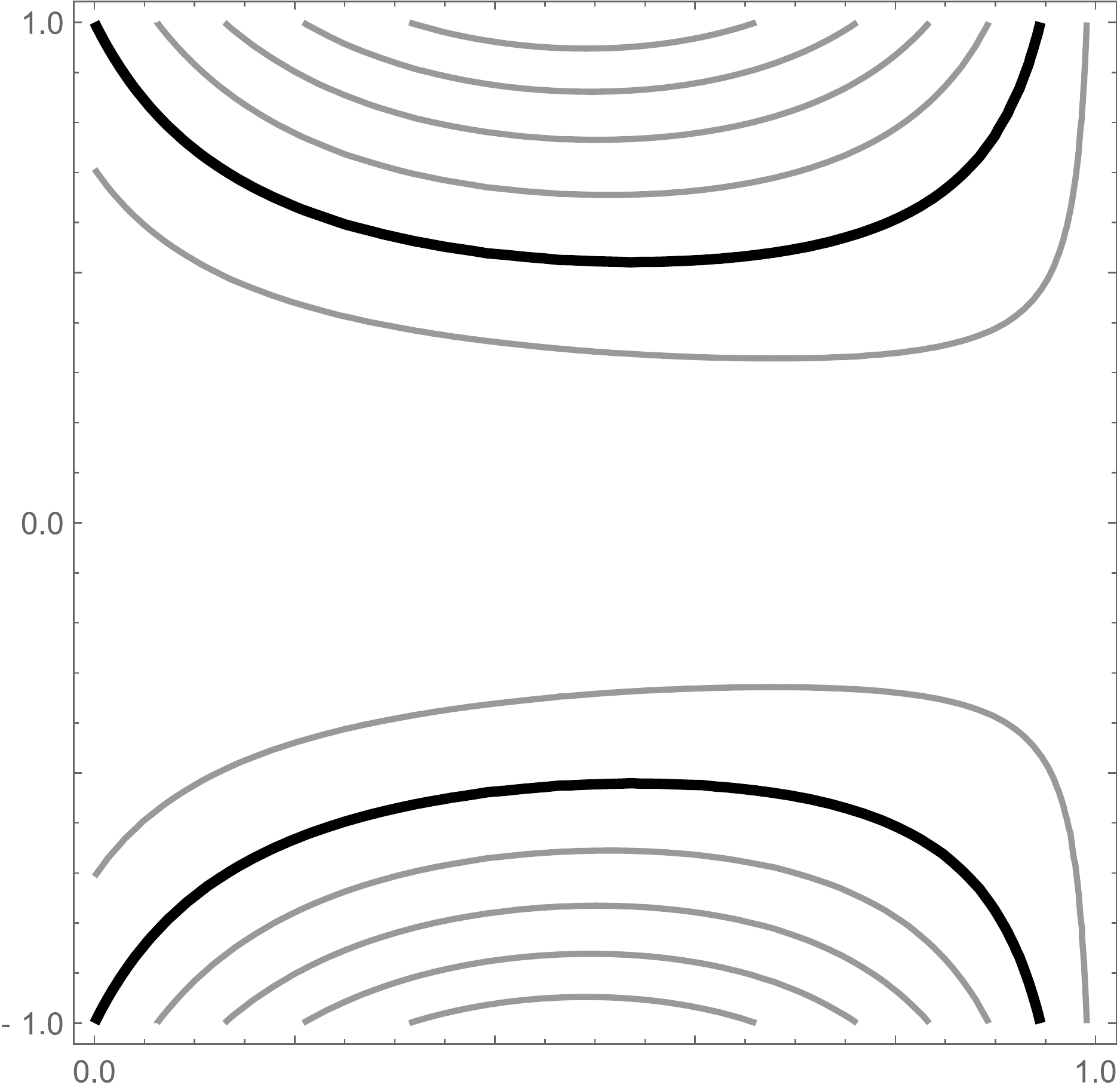}\quad
    \includegraphics[width=0.3\textwidth]{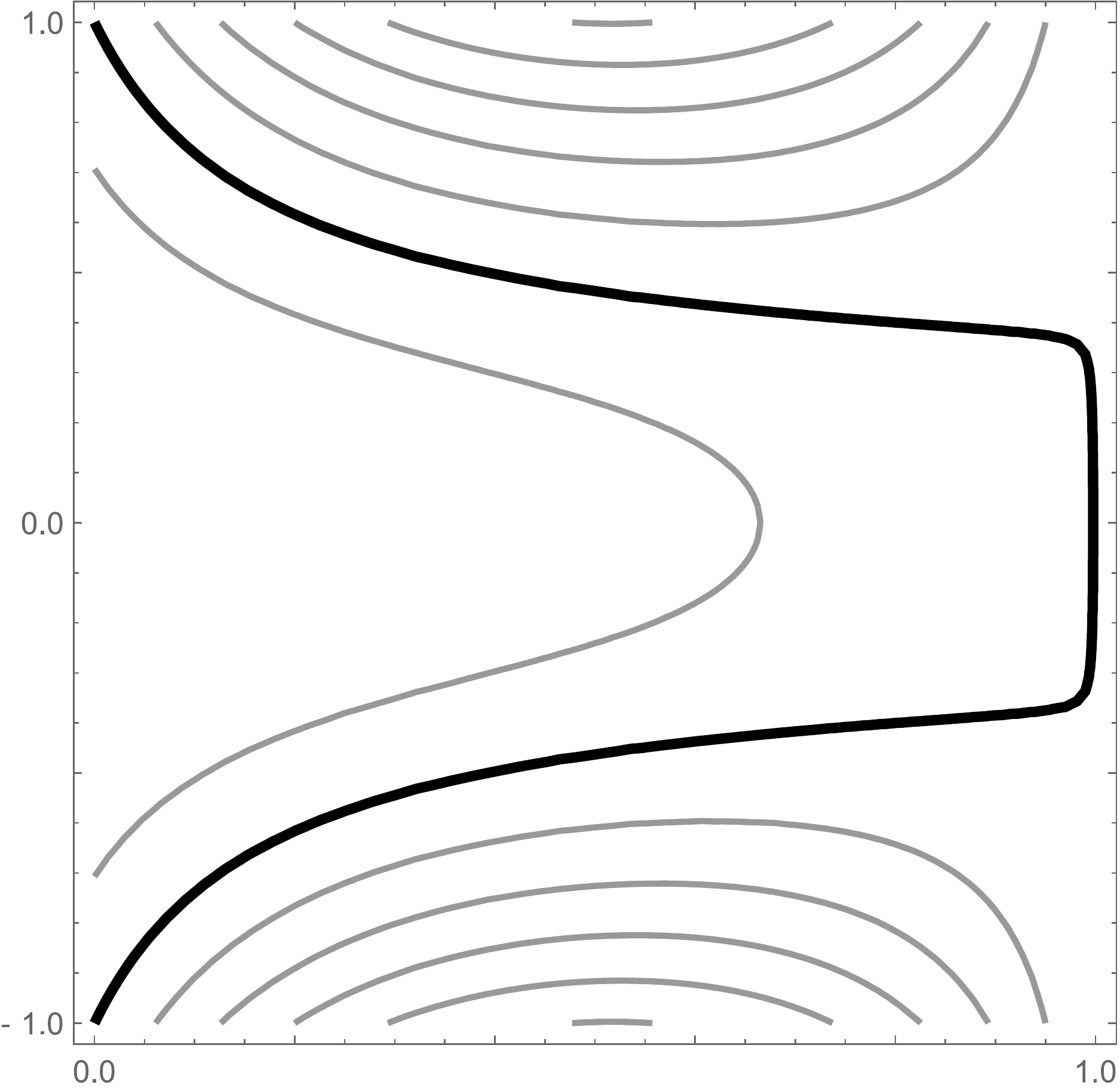}\quad
    \includegraphics[width=0.3\textwidth]{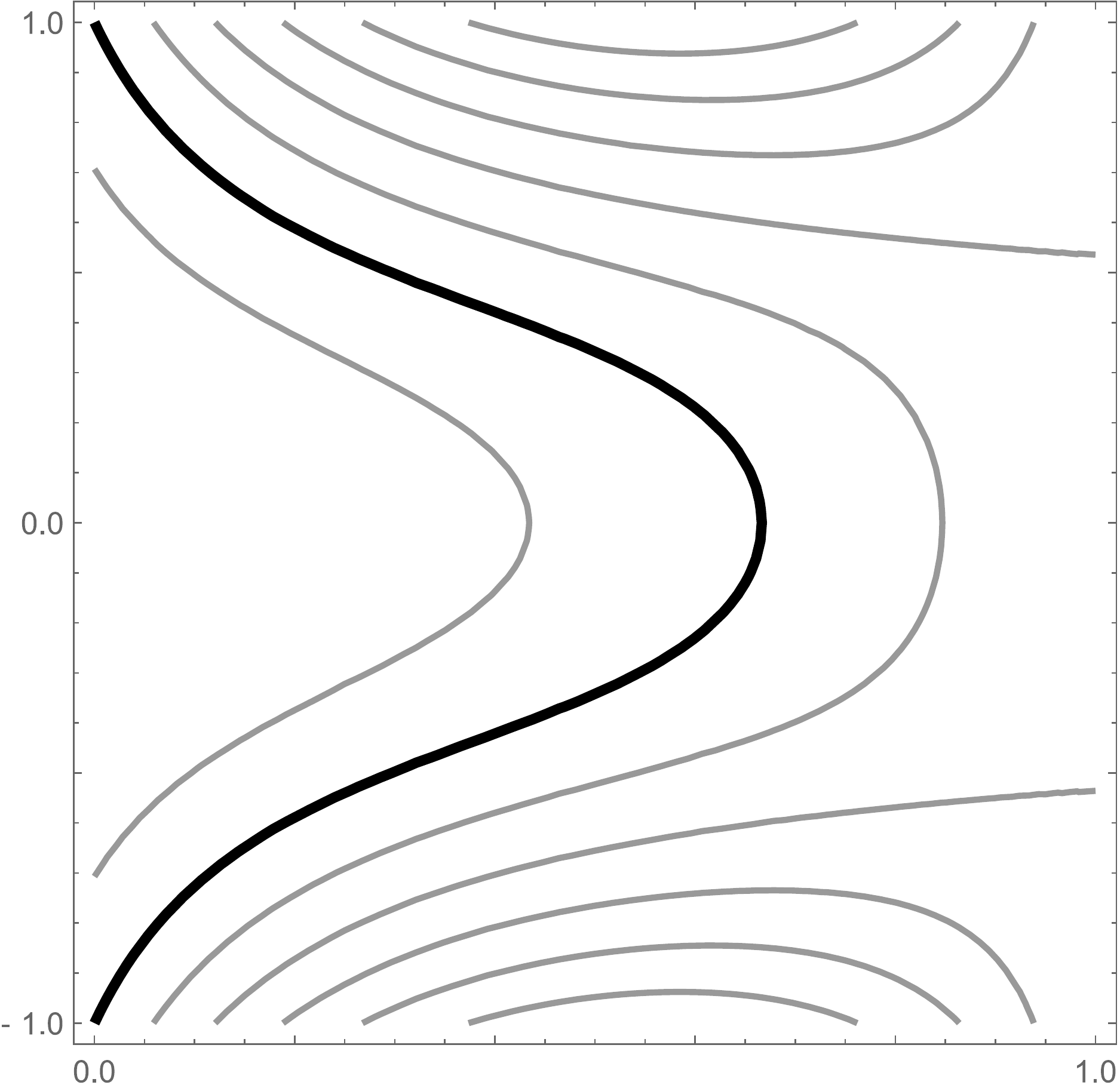}

    \caption{Contour plots of $\mathcal F$ for $K < \frac{1}{\tau^2}$ (left), $K = \frac{1}{\tau^2}$ (center) and $K > \frac{1}{\tau^2}$ (right) for $\tau = 2$, so $\lambda = -3$. The level $1$ curves are drawn in a bolder line.}
    \label{fig:contourplots-J-lambda-negative}
  \end{figure}

Assume now that $K \geq \frac{1}{\tau^2}$, that is, $K \geq \frac{1}{1-\lambda}$. On the one hand, $\mathcal F(1,Y) = K(1-\lambda) \geq 1$ and $\mathcal F(0,Y) = Y^2 \leq 1$ for all $Y  \in [-1,1]$. On the other hand,
\[
  \begin{split}
    \mathcal F(X, \pm 1) & = \frac{(1-2\lambda X)^2}{1 - \lambda X}(1-X) + K(1 - \lambda X)X \\
    &\geq  \frac{(1-2\lambda X)^2}{1 - \lambda X}(1-X) + \frac{1}{1-\lambda}(1 - \lambda X)X \\
    &= 1 - \frac{\lambda X(1-X)}{(1-\lambda)(1-\lambda X)} \bigl(\lambda(4\lambda - 3)X + 2 - 3\lambda \bigr) \geq 1 \\
  \end{split}
\] 
for all $X \in [0,1]$, where we have used the hypothesis on the Gauss curvature in the first inequality. Hence, there exists a curve in $\Gamma \cap \Pi$ joining $(0,-1,1)$ and $(0,1,1)$ (see the last two pictures in Figure~\ref{fig:contourplots-J-lambda-negative} for contour plots of $\mathcal F$ in this case).

We finally show that the solution corresponding to $\mathcal E = 1$ is a sphere and we will compute its \emph{horizontal radius} $r(\tau,K)$, i.e., the maximum distance to the axis of revolution, as well as its \emph{vertical radius} $h(\tau,K)$, i.e., half the maximum distance in the direction of the fibers for any two points in the sphere (see Figure~\ref{fig:sketch-sphere}).

Firstly, we have already proved that for $K \ge K_0$ (with $K_0$ as in the formulation of Proposition \ref{prop:CGC-spheres}), there exists a connected level $1$ curve of $\mathcal F$ joining the points $(0,1)$ to $(0,-1)$. Moreover, since both points are not critical for $\mathcal F$ (see~\eqref{eq:partials-J}), the level curve is unique. Hence, the solution associated to such level curve is unique up to a translation in $y$ (see Remark~\ref{rmk:symmetries-ODE-system}.(i)) and the corresponding profile curve $\gamma$ is defined on a compact interval $I = [0, T]$ such that $x(0) = 0$, $\alpha(0) = 0$, and  $x(T) = 0$, $\alpha(T) = \pi$. Since translation in $y$ corresponds to vertical translation along the fibers (see Remark~\ref{rmk:symmetries-ODE-system}.(i)), which are isometries of the Berger sphere, the profile curve and so the generated surface is unique up to ambient isometries.
By continuity of $\alpha$ there is an $s_1 \in ]0, T[$ such that $\alpha(s_1) = \frac{\pi}{2}$ and so $x'(s_1) = 0$ by~\eqref{eq:ODE-Rotationally-invariant-surface}. By Remark~\ref{rmk:symmetries-ODE-system}(v), the solution is symmetric with respect to the line $y = y(s_1)$. In that case, $x(2s_1) = 0$ and $\alpha(2s_1) = \pi$ so $2s_1 = T$. Moreover, we can assume by a vertical translation as in Remark~\ref{rmk:symmetries-ODE-system}(i) that $y(\frac{T}{2}) = 0$ (see Figure~\ref{fig:sketch-sphere}). 

  Now, $x'(s) > 0$ for $s \in [0, \frac{T}{2}[$ so we can write $y(s)$ as a function of $x(s)$. From \eqref{eq:ODE-Rotationally-invariant-surface} we deduce that
  \begin{equation}\label{eq:derivative-y-x}
    \frac{\mathrm{d}y}{\mathrm{d}x} = \frac{1}{\tau} \frac{\sqrt{1 - \lambda \sin^2 x}}{\cos x} \tan \alpha.
  \end{equation}
  Since $\alpha(s) \in [0, \frac{\pi}{2}]$ for $s \in [0, \frac{T}{2}]$, this implies that $y(x)$ is strictly increasing on $[0, x(\frac{T}{2})]$ (see Figure~\ref{fig:sketch-sphere}).

  \begin{figure}[htbp]
    \centering
    \includegraphics{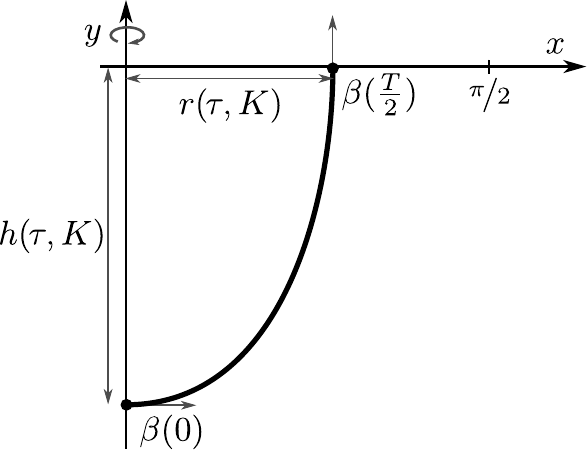}
    \caption{Sketch of the curve $\beta(s) = \bigl(x(s), y(s)\bigr)$ in the interval $[0, \frac{T}{2}]$ for a solution $(x, y, \alpha)$ with energy $\mathcal E = 1$ and curvature $K \geq K_0$. By the symmetries of the system~\eqref{eq:ODE-Rotationally-invariant-surface} (see Remark~\ref{rmk:symmetries-ODE-system}) the curve is symmetric with respect to the $x$-axis. The function $y(x)$ is strictly increasing in that interval by~\eqref{eq:derivative-y-x}. The profile curve $\gamma(s) = (e^{iy(s)} \cos x(s), \sin x(s))$ is embedded if and only if  $h(\tau, K) < \pi$.}
    \label{fig:sketch-sphere}
  \end{figure}

  As a consequence, by evaluating~\eqref{eq:integral-formula} at $s = \frac{T}{2}$ we deduce that the horizontal radius $r(\tau, K) = x(\frac{T}{2})$ satisfies
  \begin{equation}\label{eq:value-x-equator-sphere}
    \sin^2 r(\tau, K) = \sin^2 x(\tfrac{T}{2}) = \tfrac{1}{2\lambda}\left(1 - \sqrt{1 - \tfrac{4}{K}\lambda}\right),
  \end{equation}
  and that the vertical radius is
  \begin{equation}\label{eq:vertical-radius} 
    h(\tau, K) = y(r) - y(0) = \int_0^{r} \frac{\mathrm{d}y}{\mathrm{d}x} \,\mathrm{d}x = \frac{1}{\tau} \int_0^{r} \frac{\sqrt{1 - \lambda \sin^2 x}}{\cos x} \tan \alpha \,\mathrm{d}x.
  \end{equation}
  Computing $\tan \alpha$ from~\eqref{eq:integral-formula} gives the expression on the left hand side of \eqref{eq:condition-embeddedness}. The resulting surface will be embedded if and only if $h(\tau, K) < \pi$ because otherwise the image of the profile curve $\gamma(s) = (e^{iy(s)}\cos x(s), \, \sin x(s))$ will not be embedded in $H$ (see Figure~\ref{fig:sketch-sphere}).
\end{proof}

\begin{figure}[htbp]
  \centering
  \includegraphics{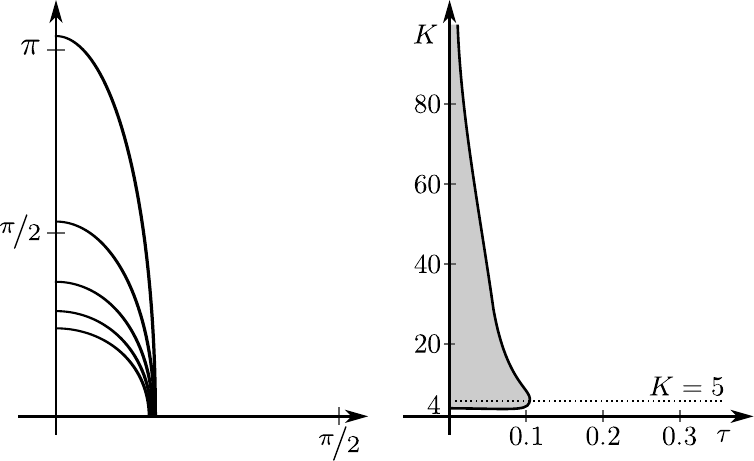}
  \caption{Left: numerical computation of the curves $\beta(s) = (x(s), y(s))$ (see Figure~\ref{fig:sketch-sphere}) of constant Gauss curvature $K = 5$ spheres in $\mathbb{S}^3_\tau$ for $\tau \in \{0.1, 0.2, 0.3, 0.4, 0.5\}$. Notice how the sphere corresponding to $\tau = 0.1$ is not embedded since its \emph{vertical radius} $h(0.1,5) > \pi$. Right: non-embedded region with boundary the curve given implicitly by $h(\tau,K) = \pi$.}
  \label{fig:region-non-embeddedness}
\end{figure}

\begin{remark}
  It is easy to check that the vertical radius (see~\eqref{eq:vertical-radius}) satisfies $\lim_{\tau \to 0} h(\tau,K) = +\infty$ so, by continuity, if $\tau$ is sufficiently small then there will be non-embedded constant Gauss curvature spheres in $\mathbb{S}^3_\tau$ (see Figure~\ref{fig:region-non-embeddedness}).
\end{remark}

\section{Proof of the main result}

We now prove the main theorem, which ensures that the only complete \textsc{cgc} rotationally invariant surfaces are the Clifford tori and the spheres described in Proposition~\ref{prop:CGC-spheres}.

\begin{proof}[Proof of Theorem~\ref{thm:main-theorem}]
  It is clear that the tori and the spheres described in Lemma~\ref{lm:constant-solutions} and Proposition~\ref{prop:CGC-spheres} are complete rotationally invariant surfaces in $\mathbb S^3_\tau$. This proves the existence part of Theorem~\ref{thm:main-theorem}.  

  To prove the uniqueness part, assume that $S$ is a complete rotationally invariant surface with constant curvature $K$ in $\mathbb S^3_\tau$ and denote its profile curve by $\gamma: I \subset \mathbb{R} \to H \subset \mathbb{S}^2(1)$. If the function $\alpha$ associated to the profile curve (see~\ref{eq:ODE-Rotationally-invariant-surface}) is constant then $S$ is a Clifford torus in $\mathbb{S}^3_\tau$ or a totally geodesic sphere in $\mathbb{S}^3_1$ by Lemma~\ref{lm:constant-solutions}. Let assume for the rest of the proof that $\alpha$ is not constant. From the discussion at the end of Remark~\ref{rmk:symmetries-ODE-system}, either $I = \mathbb{R}$ or the only possible values of $(\sin^2 x, \cos \alpha)$ at the boundary of $I$ are $(0, \pm 1)$ or $(1, a)$ for some $a \in [-1,1]$. 
  
Let us first assume that $K > 0$. Then (the universal cover of) $S$ must be a \textsc{cgc} $K$ sphere and we know from Proposition~\ref{prop:CGC-spheres} that $K \geq K_0$ for the value of $K_0$ given in the formulation of the theorem and that the corresponding rotationally invariant sphere is unique.

Finally, we assume that $K \le 0$. It suffices to obtain a contradiction in this case. The function $\mathcal F$ defined by \eqref{eq:def_J} satisfies
\begin{equation}\label{eq:upper-bound-J-K-negative}
  \mathcal F(X,0) = K(1-\lambda X) \leq 0 < 1
\end{equation}
for all $ X \in [0, 1]$. Moreover, 
  \begin{equation}\label{eq:lower-bound-J-K-negative}
    \mathcal F(X,Y) \geq K(1-\lambda X)X \geq 
    \begin{cases}
      K(1-\lambda) & \text{if } \lambda \le \frac{1}{2},  \\
      \frac{K}{4\lambda} & \text{if } \lambda > \frac{1}{2} 
    \end{cases}
  \end{equation}
for all $(X,Y) \in [0,1] \times [-1,1]$ and the minimum value is only attained at the points $(1, Y)$ if $\lambda \leq \frac{1}{2}$ and at the points $(\frac{1}{2\lambda}, Y)$ if $\lambda > \frac{1}{2}$, which are the only interior critical points of $\mathcal F$. Hence, the energy $\mathcal E = \mathcal F(\sin^2x, \cos\alpha)$ takes values in a compact interval. 
  
If $\gamma$ touches the axis of revolution, i.e., if there exists an $s_0 \in I$ with $x(s_0) = 0$, it has to be orthogonally because of completeness. Hence, $\cos \alpha(s_0) = \pm 1$ and $\mathcal E = 1$ by~\eqref{eq:integral-formula}. But, thanks to~\eqref{eq:upper-bound-J-K-negative}, the level $1$ curve of $\mathcal F$ starting at $(0, 1)$ (resp.\ $(0, -1)$) cannot end on the line segment $Y = - 1$ (resp.\ $Y = 1$). Hence, either it ends at some point $(X_0, 1)$ (resp.\ $(X_0, -1)$) with  $X_0 \in ]0, 1[$ or at $(1, Y_0)$. The former will produce a non-complete surface and the latter is impossible since  $\mathcal F(1, Y) = K(1-\lambda) \le 0$.

On the other hand, if $\gamma$ contains the north pole, i.e., if there is an $s_0 \in I$ with $\sin x(s_0) = 1$, then $\mathcal E = K(1-\lambda)$. This means that $\mathcal F(\sin^2x(s),\cos\alpha(s)) = K(1-\lambda)$ for all $s \in I$. Remark that this is precisely the value that $\mathcal F$ takes on the line segment $\{1\}\times [-1,1]$. Since $\mathcal F$ has no critical points on this line segment, there can be no level $K(1-\lambda)$ curve of $\mathcal F$ intersecting this line segment, other than the line segment itself. We conclude that $x(s) = \frac{\pi}{2}$ constant.  This means that the profile curve $\gamma$ degenerates to a point, namely to the north pole in $H \subset \mathbb S^2(1)$ and no surface is produced. As a consequence, we can also assume that $\gamma$ does not contain the north pole.

As a consequence of the previous paragraphs, the interval $I = \mathbb{R}$, that is, $(\sin^2x(s),\cos\alpha(s))$ is defined for all $s$ and lies in the interior of $[0,1]\times[-1,1]$. Since it is a compact regular curve, it must be a closed Jordan curve. Because $\mathcal F$ is constant on the curve, there must be a critical point of $\mathcal F$ in the interior of the domain bounded by it. If $\lambda \leq \frac{1}{2}$, this is impossible since $\mathcal F$ has no interior critical points. If $\lambda > \frac{1}{2}$, then the critical point inside the domain takes the form $(\frac{1}{2\lambda}, Y_0)$ for some $Y_0 \in ]-1,1[$ (see~\eqref{eq:lower-bound-J-K-negative} and the paragraph below). The line segment $\{\frac{1}{2\lambda}\} \times [-1,1]$ and the Jordan curve must hence intersect. As a consequence, for all $s \in I$, $\mathcal F(\sin^2 x(s), \cos \alpha(s)) = \mathcal F(\frac{1}{2\lambda}, Y_0) = \frac{K}{4\lambda}$, which is the minimum value of $\mathcal F$ (see~\eqref{eq:lower-bound-J-K-negative}). But this value is only attained on  $\{\frac{1}{2\lambda}\}\times[-1,1]$ so, $x$ must be constant and the associated surface is a Clifford torus by Lemma~\ref{lm:constant-solutions}.
\end{proof}

Finally, by combining Pogorelov's uniqueness result and the description of the profile curve of the spheres $S_K$ obtained above (see Figure~\ref{fig:sketch-sphere}), we show that they are the only \textsc{cgc} spheres for $K > 4 - 3\tau^2$ if $\tau^2 < 1$ and $K > \tau^2$ if $\tau^2 > 1$ (see the striped region in Figure~\ref{fig:non-existence-region-compact-cgc-surfaces-Berger}).

\begin{proof}[Proof of Corollary~\ref{cor:uniqueness}]
Since the sectional curvature of $\mathbb S^3_\tau$ is bounded from above by $4 - 3\tau^2$ if $\tau < 1$ and by $\tau^2$ otherwise (see Section~\ref{sec:introduction}) , the result follows from~\cite[Theorems~1 and~2, p.~413--418]{Pogorelov1973} once we show that, for every point $p\in \mathbb{S}^3_\tau$ and every plane $\Pi \subset T_p \mathbb{S}^3_\tau$, there exists a rotationally invariant sphere that passes through $p$ and is tangent to $\Pi$. But this is a consequence of $\mathbb{S}^3_\tau$ being an homogeneous manifold and the shape of the rotationally invariant spheres (see Figure~\ref{fig:sketch-sphere}).

More precisely, let $\Pi \subset T_{p} \mathbb{S}^3_\tau$ be any tangent plane at $p$ and $\eta$ a unit normal vector field to $\Pi$. Let us denote by $\nu_0 = g_{\tau}(\eta, \xi) \in [-1, 1]$. Let  $\gamma(s) = \bigl(e^{iy(s)}\cos x(s), \sin x(s)
\bigr)$ be the profile curve of a rotationally invariant \textsc{cgc} sphere $S_K$ (see Figure~\ref{fig:sketch-sphere}). The functions $x$ and $y$ are strictly increasing in $s \in [0, \frac{T}{2}]$ (see~\eqref{eq:derivative-y-x} and the arguments before that equation). Hence, the function $\nu(s) = g_{\tau}(N_{\gamma(s)}, \xi)$, where  $N$ is the exterior normal vector field to $S_K$ is strictly monotone for $s \in [0, \frac{T}{2}]$. Moreover, $\nu(0) = -1$ and $\nu(\frac{T}{2}) = 0$. Hence, $\nu([0, \frac{T}{2}]) = [-1, 0]$ and, by symmetry, $\nu([\frac{T}{2},0]) = [0, 1]$. Hence, there exists a unique $s_0 \in [0, T]$ such that $\nu(s_0) = \nu_0$. Translate $S_K$ from  $\gamma(s_0)$ to $p$ so $N_p$ and $\eta$ lie in the same tangent space  $T_{p}\mathbb{S}^3_\tau$ and have the same orthogonal projection to $\xi_{p}$.  By the rotational invariance of the sphere, $S_K$ will be tangent to  $\Pi$ after a rotation of angle $\theta$, with $\cos\theta = g_{\tau}(N_{\gamma(s_0)}, \eta)$.
\end{proof}

\end{document}